\documentclass[12pt]{article}

\addtolength{\topmargin}{-1cm}
\addtolength{\textheight}{2cm}
\addtolength{\oddsidemargin}{-1.4cm}
\addtolength{\evensidemargin}{-1.4cm}
\addtolength{\textwidth}{2.65cm}

\usepackage{amssymb} \usepackage{graphicx}
\usepackage[utf8x]{inputenc}
\usepackage{enumerate} \usepackage{amsfonts} \usepackage{amsmath}
\usepackage{stmaryrd} 
\usepackage{xspace}

\usepackage{hyperref}
\usepackage{amsthm}
\usepackage{color}
\usepackage{authblk}
\setlength{\affilsep}{2em}

\hypersetup{
  pdftitle = {Edep},
  pdfauthor = {J-.F. Raymond, I. Sau and D. M. Thilikos},
  colorlinks = true,
  linkcolor = black!30!red,
  citecolor = black!30!green
}

\newcommand{\ie}{\emph{i.e.}}

\newcommand{\floor}[1]{\left\lfloor#1\right\rfloor}
\newcommand{\ceil}[1]{\left\lceil#1\right\rceil}
\newcommand{\third}[1]{\frac{#1}{3}}
\newcommand{\paren}[1]{\left(#1\right)}
\newcommand{\acc}[1]{\left\{ #1 \right\}}

\newcommand{\N}{\mathbb{N}}
\newcommand{\R}{\mathbb{R}}
\newcommand{\intv}[2]{\left \llbracket #1, #2 \right \rrbracket}
\DeclareMathOperator{\tw}{\mathbf{tw}}

\DeclareMathOperator{\dg}{\mathrm{deg}} 

\DeclareMathOperator{\vertices}{V}
\DeclareMathOperator{\edges}{E}

\DeclareMathOperator{\neigh}{{N}} 

\newcommand{\desc}[1]{\mathrm{desc}_{#1}}
\newcommand{\induced}[2]{#1\!\left [#2 \right ]} 

\DeclareMathOperator{\maxdeg}{{\Delta}} 
\newcommand{\card}[1]{\left | #1 \right |} 
\newcommand{\subgraph}{\subseteq} 
\newcommand{\lminor}{\leq_\mathrm{m}} 
\newcommand{\contains}{\geq_\mathrm{m}} 




\newcommand{\marked}[1]{(#1)}


\newcommand{\pack}[1]{\mathbf{pack}^\mathbf{v}_{#1}}
\newcommand{\cover}[1]{\mathbf{cover}^\mathbf{v}_{#1}}
\newcommand{\edpack}[1]{{\mathbf{pack}^\mathbf{e}_{#1}}}
\newcommand{\edcover}[1]{{\mathbf{cover}^\mathbf{e}_{#1}}}

\DeclareMathOperator{\polylog}{polylog}

\renewcommand{\leq}{\leqslant} \renewcommand{\geq}{\geqslant}

\newtheorem{theorem}{Theorem}
\newtheorem{lemma}{Lemma}
\newtheorem{corollary}{Corollary}
\newtheorem{proposition}{Proposition}

\theoremstyle{remark}
\newtheorem{remark}{Remark}

\theoremstyle{definition}

\def\claimb{$$\vcenter\bgroup\advance\hsize by -8em\noindent
\refstepcounter{claimb}\ignorespaces\it} \makeatletter
\def\endclaimb{\rm\egroup\leqno(\theclaimb)$$\global\@ignoretrue}
\makeatother

\newenvironment{proofclaim}[1][]{
\noindent \emph{Proof~#1. }{}{}{}} {\hfill$\Diamond$\vspace{1em}}

\usepackage{tikz}
\usetikzlibrary{decorations.pathreplacing}
\tikzstyle{every node} = [draw, circle, fill = black, minimum size = 4pt, inner sep = 0pt]
\tikzstyle{normal} = [draw=none, fill = none]

\begin{document}

\date{}

\title{An edge variant of the Erd{\H o}s-P{\'o}sa property\footnote{
This work was supported by the ANR project AGAPE
  (ANR-09-BLAN-0159) and the Languedoc-Roussillon Project ``Chercheur
  d'avenir'' KERNEL. The first author was partially supported by
  the (Polish) National Science Centre grant PRELUDIUM
  2013/11/N/ST6/02706. The third author was co-financed by the European
  Union (European Social Fund ESF) and Greek national funds through
  the Operational Program ``Education and Lifelong Learning'' of the
  National Strategic Reference Framework (NSRF) - Research Funding
  Program: ARISTEIA II. Email addresses: \texttt{jean-florent.raymond@mimuw.edu.pl}, \texttt{ignasi.sau@lirmm.fr}, and \texttt{sedthilk@thilikos.info}.}}

\author[1,2,3]{Jean-Florent Raymond}
\author[2]{Ignasi Sau}
\author[2,4]{Dimitrios M. Thilikos}

\affil[1]{Faculty of Mathematics, Informatics and Mechanics,
  University of Warsaw, Warsaw, Poland}

\affil[2]{AlGCo project team, CNRS, LIRMM, Montpellier, France}
\affil[3]{University of Montpellier, Montpellier, France}
\affil[4]{Department of Mathematics, National and Kapodistrian
  University of Athens, Athens, Greece}

\maketitle
\begin{abstract}
\noindent For every~$r\in \N$, we denote by $\theta_{r}$ the
multigraph with two vertices and $r$ parallel edges. Given a graph
$G$, we say that a subgraph $H$ of $G$ is {\em a model of
$\theta_{r}$ in $G$} if $H$ contains $\theta_{r}$ as a contraction.
We prove that the following edge variant of the Erdős-Pósa
property holds for every $r\geq 2$: if $G$ is a graph and $k$ is a
positive integer, then either $G$ contains a packing of $k$ mutually
edge-disjoint models of $\theta_{r}$, or it contains a set $S$ of
$f_r(k)$ edges such that $G\setminus S$ has no $\theta_{r}$-model, for both $f_r(k) = O(k²r³ \polylog kr)$ and
$f_r(k) = O(k^4r² \polylog kr).$

\noindent \textit{Keywords:} Erdős-Pósa property, packings in graphs, coverings in graphs.

\noindent \textit{2000 MSC:} 05C70.
\end{abstract}

\section{Introduction}
\label{sec:intro}

Typically, an Erdős-Pósa property reveals relations between covering
and packing invariants in combinatorial structures. The origin of the
study of such properties comes from the Erdős-Pósa
Theorem~\cite{ErdosP65}, stating that there is a function $f:\N
\rightarrow \N$ such that for every $k \in \N$ and for every graph
$G,$ either $G$ contains $k$ vertex-disjoint cycles, or there is a set
$X$ of $f(k)$ vertices in $G$ meeting all cycles of $G$. In
particular, Erdős and Pósa proved this result for $f(k)=O(k\cdot \log
k)$.

An interesting line of research aims at extending Erdős-Pósa Theorem
for packings and coverings of more general graph structures. In this
direction, we say that a graph class ${\cal G}$ {\em satisfies the
Erdős-Pósa property} if there exists a function $f_{\cal G}: \N \to
\N$ such that, for every graph $G$ and every positive integer $k$,
either $G$ contains $k$ mutually vertex-disjoint subgraphs, each
isomorphic to a graph in ${\cal G}$, or it contains a set $S$ of
$f_{\cal G}(k)$ vertices meeting every subgraph of $G$ that is
isomorphic to a graph in ${\cal G}$.  When this property holds for a
class ${\cal G}$, we call the function $f_{\cal G}$ {\em the gap of
the Erdős-Pósa property for the class ${\cal G}$.} In this sense, the
classic Erdős-Pósa Theorem says that the class containing all cycles
satisfies the Erdős-Pósa property with gap $O(k \cdot \log k)$.

Given a graph $J$, we denote by ${\cal M}(J)$ the set of all graphs
containing $J$ as a contraction.  Robertson and Seymour proved the
following proposition, which in particular can be seen as an extension
of the Erdős-Pósa Theorem.

\begin{proposition}\label{thrs} Let $J$ be a graph. The class ${\cal
M}(J)$ satisfies the Erdős-Pósa property if and only if $J$ is
planar.
\end{proposition}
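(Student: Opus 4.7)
The plan is to prove each direction of the equivalence separately. For the ``if'' direction (sufficiency), where $J$ is planar, I would rely on the Excluded Grid Theorem of Robertson and Seymour: there is a function $g$ such that any graph $G$ with $\tw(G) \geq g(t)$ contains the $t \times t$ grid as a minor. Since $J$ is planar, $J$ is itself a minor of a sufficiently large grid, say for $t = t(J)$. Proving the Erdős-Pósa property for $\mathcal{M}(J)$ then proceeds by induction on $k$: given $G$ and $k$, either $\tw(G) \leq h(k)$ for a suitable function $h$, in which case a standard tree-decomposition argument (taking one bag per ``piece'' of a nice decomposition) yields a hitting set of size $O(h(k))$ meeting all $J$-models, or $\tw(G) > h(k)$ and one extracts a single $J$-model from a large grid minor and recurses on the graph obtained by deleting the vertices of this model.

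For the ``only if'' direction (necessity), where $J$ is non-planar, I would construct a sequence of counterexamples $(G_n)_{n \in \N}$ showing that no gap function can exist. Let $\Sigma$ be a surface of minimum Euler genus on which $J$ embeds, and take $G_n$ to be a graph of girth at least $n$ and large face-width embedded on $\Sigma$ (for instance via an appropriate triangulation-like construction). By the theory of graphs on surfaces, embeddings of large face-width force $G_n$ to contain $J$ as a minor. However, any two vertex-disjoint $J$-models would contradict the minimality of the Euler genus of $\Sigma$, since the two disjoint copies would split off a sub-embedding on a strictly simpler surface; hence the packing number of $G_n$ is bounded by a constant depending only on $J$. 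On the other hand, large girth combined with a bounded packing number forces the minimum hitting set to grow unboundedly with $n$, ruling out any finite gap.

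The main obstacle will be making the sufficiency direction quantitatively robust. In the inductive step one must ensure that after removing the vertices of a freshly extracted $J$-model, enough of the grid structure remains to continue the recursion; this is typically handled by realising the $J$-model inside a small subgrid placed far from the ``boundary'' of a much larger grid minor, so that a substantial sub-grid survives the deletion. Carefully tracking how the treewidth threshold $h(k)$ depends on the grid bound coming from the Excluded Grid Theorem is precisely what determines the gap $f_{\mathcal{M}(J)}$ obtained for planar $J$, and it is where any explicit proof of Proposition~\ref{thrs} concentrates most of its technical effort.
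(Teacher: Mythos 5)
The paper does not actually prove Proposition~\ref{thrs}; it only cites \cite{RobertsonS86GMV} and \cite[Corollary~12.4.10 and Exercise~40 of Chapter~12]{Diestel05grap}, so there is no internal proof to compare against. Judged on its own terms, your sketch of the sufficiency direction is the standard Excluded Grid argument and agrees in outline with those references; the bookkeeping issues you flag at the end (surviving a large sub-grid after deleting a model, tracking $h(k)$) are real but acceptable to leave implicit at this level of detail.

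The necessity direction, however, has a genuine gap. Your set-up is correct: take $\Sigma$ of minimum Euler genus into which $J$ embeds, let $G_n$ be embedded in $\Sigma$ with large face-width so that $G_n$ contains a $J$-minor, and use additivity of Euler genus over disjoint unions to bound $\pack{J}(G_n)$ by a constant. But the concluding claim, ``large girth combined with a bounded packing number forces the minimum hitting set to grow unboundedly with $n$,'' is false as a general principle: a high-girth subdivision of a single subdivision of $J$ has arbitrarily large girth, $\pack{J}=1$, and $\cover{J}=1$. (Girth is also incompatible with the ``triangulation-like'' $G_n$ you propose, whose girth is $3$, so the construction as stated is self-contradictory.) What is actually needed, and what your proposal omits, is a deletion-robustness statement: removing a vertex set $X$ that is small compared to the face-width of $G_n$ cannot destroy all $J$-models, because $G_n\setminus X$ still has face-width (equivalently, still contains a wall wrapping around a handle or crosscap of $\Sigma$) large enough to force a $J$-minor. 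It is this robustness of the $J$-minor under small deletions --- not girth --- that makes $\cover{J}(G_n)$ unbounded in $n$ and rules out any gap function for non-planar $J$. This is exactly the calculation behind the paper's own remark in Section~\ref{sec:fr} that $\Omega(\sqrt{n})$ edges are needed to hit all $K_5$-models of an $n$-vertex toroidal wall.
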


A proof of Proposition~\ref{thrs} appeared for the first time in
\cite{RobertsonS86GMV}. Another proof can be found in Diestel's
monograph~\cite[Corollary 12.4.10 and Exercise 40 of
Chapter~12]{Diestel05grap}. In view of Proposition~\ref{thrs}, it is
natural to try to derive good estimations of the gap function
$f_{{\cal M}(J)}$ in the case where $J$ is a planar graph.  In this
direction, the recent breakthrough results of Chekuri and Chuzhoy
imply that $f_{{\cal M}(J)}(k)=k\cdot
\polylog k$~\cite{ChekuriC13larg} when $J$ is a planar graph and, even more, that $f_{{\cal
M}(J)}=(k+|V(J)|)^{O(1)}$~\cite{ChekuriC13poly}.  Before~this, the
best known estimation of the gap for planar graphs was exponential,
namely $f_{{\cal M}(J)}(k) = 2^{O(k \log k)}$, and could be deduced
from~\cite{LeafS12sube} using the proof
of~\cite{RobertsonS86GMV}. Moreover, some improved polynomial gaps
have been proven for particular instantiations of the graph $J$
(see~\cite{FominLMPS13quad,RaymondT13poly,RaymondT13lowp,FioriniJS13,FioriniJW12excl,
FioriniHJ13}). Another direction is to add restrictions on the graphs
$G$ that we consider, which usually allows to optimize the gap
$f_{{\cal M}(J)}$.  In this direction, it is known that $f_{{\cal
M}(J)}=O(k)$ in the case where graphs are restricted to some
non-trivial minor-closed class~\cite{FominST11stre}.
\medskip

We consider the edge counterpart of the Erdős-Pósa property, where
packings are edge-disjoint (instead of vertex-disjoint) and coverings
contain edges instead of vertices. We say that a graph class ${\cal
G}$ satisfies {\em the edge variant of the Erdős-Pósa property} if
there exists a function $f_{\cal G}$ such that, for every graph $G$
and every positive integer $k$, either $G$ contains $k$ mutually
edge-disjoint subgraphs, each isomorphic to a graph in ${\cal G}$, or
it contains a set $X$ of $f_{\cal G}(k)$ edges meeting every subgraph of
$G$ that is isomorphic to a graph in ${\cal G}$. Recently, the edge
variant of the Erdős-Pósa property was proved
in~\cite{KawarabayashiK12} for 4-edge-connected graphs in the case
where ${\cal G}$ contains all odd cycles.  \medskip

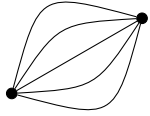
\begin{figure}[h]
 \centering
 \begin{tikzpicture}[rotate=30]
  \node (x) at (0,0) {};
  \node (y) at (2,0) {};
  \draw (x) .. controls (1, 1) and (1, 1) .. (y);
  \draw (x) .. controls (1, -1) and (1, -1) .. (y);
  \draw (x) .. controls (1, .5) and (1, .5) .. (y);
  \draw (x) .. controls (1, -.5) and (1, -.5) .. (y);
  \draw (x) -- (y);
 \end{tikzpicture}
 \caption{The graph $\theta_5.$}
 \label{fig:t5}
\end{figure}

In this paper we concentrate on the case where ${\cal G}={\cal M}(J)$
for some graph $J$.  We find it an interesting question whether an
edge-analogue of Proposition~\ref{thrs} exists or not.  To our
knowledge, the only case for which ${\cal M}(J)$ satisfies the edge
variant of the Erdős-Pósa property is when $J=K_{3}$, \ie~when the
class of graphs ${\cal G}$ contains all cycles. This result is the
edge-counterpart of the Erdős-Pósa Theorem and appears as a (hard)
exercise in~\cite[Exercise 23 of Chapter 7]{Diestel05grap}. For every
$r\geq 2$, let $\theta_{r}$ be the graph containing two vertices and
$r$ multiple edges between them (see Figure~\ref{fig:t5}). The results
of this paper can be stated as follows:

\begin{theorem}
\label{main} The edge variant of the Erdős-Pósa property holds for
${\cal M}(\theta_{r})$ with gap $f_{{\cal
M}(\theta_{r})},$ with
\begin{align*}
  f_{{\cal M}(\theta_{r})}(k) = O(k²r³ \polylog kr)\quad\text{and}\quad f_{{\cal M}(\theta_{r})}(k) = O(k^4r² \polylog kr).
\end{align*}
\end{theorem}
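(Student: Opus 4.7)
The natural scheme is induction on $k$ via iterative extraction of $\theta_r$-models. The base $k=1$ is immediate: if $G$ has no $\theta_r$-model then the empty edge set covers; otherwise the model itself is a $1$-packing. For the inductive step, assuming $f_r(k-1)$ has been established, I would aim to prove an inequality of the form
\[
f_r(k) \;\leq\; f_r(k-1) + g_r(k),
\]
where $g_r(k)$ measures the edge-size of a suitably chosen \emph{compact} $\theta_r$-model extracted from $G$. Iteration then yields $f_r(k) \leq \sum_{i=1}^{k} g_r(i)$.

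The extraction dichotomy runs as follows. Find a $\theta_r$-model $M$ in $G$ with $|E(M)| \leq g_r(k)$. Either $G - E(M)$ already fails to contain a $(k-1)$-packing of edge-disjoint $\theta_r$-models, in which case induction supplies an edge hitting set $S'$ for $G - E(M)$ of size $f_r(k-1)$ and $S' \cup E(M)$ is a hitting set for $G$ of size at most $f_r(k-1) + g_r(k)$; or such a $(k-1)$-packing exists in $G - E(M)$, and adjoining $M$ to it gives an edge-disjoint $k$-packing in $G$, contradicting the hypothesis that no such packing exists.

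The main obstacle, absorbing essentially all the work, is the structural step of producing a compact $\theta_r$-model. A useful normal form is to require $M$ to consist of two disjoint subtrees $T_1,T_2$ joined by exactly $r$ edges between $V(T_1)$ and $V(T_2)$, with total edge count bounded. To enforce compactness I would take $M$ minimizing a well-chosen potential (e.g.\ total edges, or a weighted quantity penalizing long induced paths in $T_1\cup T_2$) and run a rerouting argument: if a tree-branch or connecting path is too long, one can \emph{slide} the model along it to expose a second edge-disjoint copy, contradicting minimality and simultaneously witnessing one more unit of the packing. The technical engine here is likely a treewidth/grid-minor dichotomy in the spirit of Chekuri--Chuzhoy: when the treewidth of the relevant subgraph of $G$ is large, one extracts many edge-disjoint $\theta_r$-models directly from a wall or grid minor (and is done), while bounded-treewidth pieces admit an elementary compactness argument giving a short model.

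The two incomparable bounds $O(k^2 r^3 \polylog(kr))$ and $O(k^4 r^2 \polylog(kr))$ correspond to two regimes for $g_r(k)$. The first, tighter in $k$ but cubic in $r$, requires $g_r(k)=O(kr^3\polylog(kr))$ and should come from a ``parallel paths'' argument in which the two trees contain $O(r)$ degree-three branch vertices and the connecting paths have controlled total length of order $kr^2\polylog(kr)$. The second requires only $g_r(k)=O(k^3r^2\polylog(kr))$ and can be extracted by a cruder invocation of polynomial grid-minor bounds, trading a worse $k$-exponent for a better $r$-exponent. I would prove the two variants separately: first the coarser $k^4r^2$ bound as a warm-up via a direct grid-minor extraction, then refine to the sharper $k^2r^3$ bound through the rerouting/compactness argument sketched above, keeping all parameters explicit so that the \textbf{two} gap estimates can be read off from the same induction.
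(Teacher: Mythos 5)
Your scheme hinges on a compactness claim---that one can always find a $\theta_r$-model $M$ with $|E(M)|$ bounded by a polynomial in $k$ and $r$---and this is false, which breaks the induction at the very first step. Take $r=2$, so that $\theta_2$-models are exactly cycles, and let $G$ be a long cycle $C_n$. Then $\tw(G)=2$, there is no pair of edge-disjoint $\theta_2$-models, yet the \emph{only} $\theta_2$-model in $G$ is $C_n$ itself, with $n$ edges. Your bounded-treewidth branch is supposed to supply a short model, but none exists; the rerouting step cannot ``slide to expose a second edge-disjoint copy'' because there is no second copy to expose; and the inductive cover $S'\cup E(M)$ has size $n$, whereas a single edge suffices. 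The underlying difficulty is that a minimum edge-hitting set need not be, even approximately, a union of edge sets of models, so iterated model deletion cannot in general produce a polynomial (or any nontrivial) gap.

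The paper sidesteps this entirely by reducing to the vertex version through a degree argument rather than by extracting short models. The pivotal structural fact is Lemma~\ref{l:bigdg}: if $G$ is biconnected and $\maxdeg(G)\geq 2kr$, then $G$ already contains $k$ edge-disjoint $\theta_r$-models. Its proof marks the neighbours of a maximum-degree vertex $v$ inside a spanning tree of $G\setminus\{v\}$ and repeatedly splits off a subtree carrying between $r$ and $2r$ marks (Lemma~\ref{l:rgp}). One may therefore assume $\maxdeg(G)<2kr$, in which case any vertex-hitting set $X$ for $\theta_r$-models converts to an edge-hitting set of size at most $2kr\cdot|X|$ by taking all edges incident to $X$ (Lemma~\ref{l:transl}). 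The vertex gap itself comes from the treewidth dichotomy you allude to (Chekuri--Chuzhoy, Section~\ref{sec:bound-tw}) combined with a balanced-separator recursion on a nice tree decomposition (Lemmas~\ref{l:v_sep_tri} and~\ref{l:small_tw}). Your proposal shares the treewidth dichotomy in spirit but is missing the degree-reduction step, which is where the actual edge-to-vertex translation happens; without it the argument does not go through.
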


Theorem~\ref{main} is the edge-counterpart of the main result
of~\cite{FominLMPS13quad}. The proof is presented in
Section~\ref{sec:eep} and contains three main ingredients. The first
is a reduction of the problem to graphs of bounded degree, presented
in Section \ref{sec:deltabound}. The second is an application of recent
results of~\cite{ChekuriC13larg} to obtain bounds on the treewidth of
the graphs we consider (Section~\ref{sec:bound-tw}) and the last is an
extension of the techniques in~\cite{FominST11stre} fitting our needs,
which is presented in
Section~\ref{sec:vert-edg}. Section~\ref{sec:secpr} contains
definitions and preliminary results and Section~\ref{sec:fr} discusses
further research about the problem investigated in this paper.

\section{Definitions and preliminaries}
\label{sec:secpr}

For any graph~$G$, $\vertices(G)$ (respectively $\edges(G)$) denotes the \emph{set
of vertices} (respectively \emph{edges}) of~$G$. Even when dealing with
multigraphs (\ie~graphs where more than one edge is allowed between
two vertices) we will use the term \emph{graph}.
A graph~$G'$ is a \emph{subgraph} of a graph~$G$ if $\vertices(G')
\subseteq \vertices(G)$ and $\edges(G') \subseteq \edges(G)$, and we
denote this by~$G' \subgraph G$.
If~$X$ is a subset of~$\vertices(G)$ (respectively~$\edges(G)$), we denote by~$\induced{G}{X}$ the
\emph{subgraph of~$G$ induced by~$X$}, \ie~the graph with vertex set
$X$ (respectively~$\cup_{e \in X} e$) 
and edge set $\acc{\{x,y\} \in \edges(G),\ x \in X\ \mathrm{and}\
 y \in X}$ (respectively $X$). If $S$ is a subset of vertices or edges of a graph $G$,
the graph $G \setminus S$ is the graph obtained from $G$ after
the removal of the elements of $S.$ For every vertex~$v \in\vertices(G)$ the
 neighborhood of~$v$ in~$G,$ denoted by~$\neigh_G(v),$ is the subset
 of vertices that are adjacent to~$v$, and its size is called the
 \emph{degree} of~$v$ in~$G,$ written~$\dg_G(v).$ The maximum
 degree~$\maxdeg(G)$ of a graph~$G$ is the maximum value taken
 by~$\dg_G$ over~$\vertices(G).$
 Given a non-negative integer $k$, a triple $(V_1, S,
 V_2)$ is called a {\em $k$-separation triple} of a graph $G$ if
 $\card{S} \leq k$ and $\{V_1, S, V_2\}$ is a partition of
 $\vertices(G)$ such that there is no edge between a vertex of $V_1$
 and a vertex of $V_2.$
 Unless otherwise stated, logarithms are binary.
 For any two integers $a,b$ such that $a \leq b,$ the notation
 $\intv{a}{b}$ stands for the set of integers $\{a, a+1, \dots, b\}.$
 In a tree $T$, rooted at a vertex $r \in \vertices(T),$ a vertex $u \in
 \vertices(T)$ is said to be a \emph{descendant} of a vertex $v \neq
 u$ if the path in $T$ from $r$ to $u$ contains~$v.$ The set of
 descendants of~$v$ is denoted by $\desc{T}(v).$ A graph is
 \emph{biconnected} if the removal of any vertex leaves the
 graph connected, and a biconnected component of a graph is a maximal
 biconnected subgraph.

\paragraph{Minors and models} In a graph~$G,$ a \emph{contraction} of
an edge $e=\{u,v\} \in \edges(G)$ is the operation that removes $e$
from $G$ and identifies the vertices~$u$ and~$v.$ In this paper, we
keep multiple edges that may appear between two vertices after a
contraction (for instance, contracting an edge in a triangle gives a
graph with two vertices connected by two edges).
For any graph~$J$, let~$\mathcal{M}(J)$ denote the class of \emph{contraction
 models} (models for short) of $J$, \ie~the class of graphs that can
be contracted to $J.$ We say that a graph $J$ is \emph{minor} of a
graph $G$ (denoted by $J \lminor G$) if a subgraph of $G$ is a model
of $J$ (\emph{$J$-model} for short), or, equivalently, if $J$ can be
obtained from $G$ by a series of vertex deletions, edge deletions, and
edge contractions.

\paragraph{Packings and coverings} Let $G$ and $J$ be graphs.
We denote by $\pack{J}(G)$ the maximum number of vertex-disjoint models
of $J$ in~$G$ and by $\cover{J}(G)$ the minimum size of a subset $S
\subseteq \vertices(G)$ (called $J$-\emph{vertex-hitting set}) that meets
the vertex sets of all models of $J$ in $G.$ These invariants are
widely studied in the context of the classic Erdős-Pósa property.

Similarly, we write $\edpack{J}(G)$ for the maximum number of
edge-disjoint models of $J$ in~$G$ and $\edcover{J}(G)$ for the minimum
size of a subset $S \subseteq \edges(G)$ (called
$J$-\emph{edge-hitting set}) that meets the edge sets of all models of
$J$ in $G.$
Obviously, for every two graphs $G$ and $J$, the following inequality
holds:
\[
\edpack{J}(G) \leq \edcover{J}(G).
\]
A graph $J$ is said to satisfy the \emph{(vertex-)Erdős-Pósa property for minors}
(\emph{vertex-Erdős-Pósa property} for short) if there is a function
$f_J \colon \N \to \N,$ called \emph{vertex-Erdős-Pósa gap} of $J$, such that for every graph $G$, the following holds:
\[
\cover{J}(G) \leq f_J(\pack{J}(G)).
\]

The research of this paper is motivated by the course of detecting
 graphs $J$ for which there is
a function $h_J \colon \N \to \N$ satisfying the following inequality for every graph $G$:
\begin{eqnarray}\label{eqn:edep}
\edcover{J}(G) \leq h_J(\edpack{J}(G)).
\end{eqnarray}
Such graphs are said to satisfy the \emph{edge variant of the Erdős-Pósa property for
minors} (or, in short, the \emph{edge-Erdős-Pósa property}) and the
function $h_{J}$ is called the \emph{gap} of the
edge-Erdős-Pósa property for~$J$ (\emph{edge-Erdős-Pósa gap} for
short). This definition is an edge-counterpart to the existing Erdős-Pósa
property and (vertex-)Erdős-Pósa gap.

\paragraph{Treewidth}
A \emph{tree decomposition} of a graph~$G$
is a pair~$(T,\mathcal{V})$ where $T$ is a tree and
$\mathcal{V}$ a family $(V_t)_{t \in \vertices(T)}$ of
subsets of $\vertices(G)$ (called \emph{bags}) indexed by
the vertices of $T$ and such that
 \begin{enumerate}[(i)]
 \item $\bigcup_{t \in \vertices(T)} V_t = \vertices(G)$;
 \item for every edge~$e$ of~$G$ there is an element of~$\mathcal{V}$
containing both endpoints of~$e$; and
 \item for every~$v \in \vertices(G)$, the subgraph of~${T}$
induced by $\{t \in \vertices(T)\mid {v \in V_t}\}$ is connected.
 \end{enumerate}

The \emph{width} of a tree decomposition~${T}$ is defined as
$\max_{t \in \vertices(T)}~{\card{V_t} - 1}$ (that is, the maximum
size of a bag minus one). The \emph{treewidth} of~$G$,
written~$\tw(G)$, is the minimum width of any of its tree
decompositions.\\

 A tree decomposition $(T,\mathcal{V})$ of a graph $G$ is said to be a \emph{nice} tree
 decomposition if
 \begin{enumerate}[(i)]
 \item every vertex of $T$ has degree at most 3;
 \item $T$ is rooted at one of its vertices $r$ whose bag is empty ($V_r = \emptyset$); and
 \item every vertex $t$ of $T$ is
  \begin{itemize}
  \item either a \emph{base node}, \ie~a leaf of $T$ whose
   bag is empty ($V_t = \emptyset$) and different from the root;

  \item or an \emph{introduce node}, \ie~a vertex with only one
   child $t'$ such that $V_{t} = V_{t'} \cup \{u\}$ for some $u \in
   \vertices(G)$;

  \item or a \emph{forget node}, \ie~a vertex with only one
   child $t'$ such that $V_{t'} = V_{t} \cup \{u\}$ for some $u \in
   \vertices(G)$;

  \item or a \emph{join node}, \ie~a vertex with two children $t_1$ and
   $t_2$ such that $V_t = V_{t_1} = V_{t_2}$.
  \end{itemize}
 \end{enumerate}
It is known that every graph has an optimal tree decomposition which
is nice \cite{Kloks94}.

\paragraph{The graph $\theta_r$ and the Erdős-Pósa property}
The vertex-Erdős-Pósa property of $\theta_r$ received
some attention, in particular in~\cite{FominLMPS13quad, FioriniJS13,
  JoretPSST11}. For instance the main result of \cite{FominLMPS13quad}
is the following estimation of the vertex-Erdős-Pósa gap
for~$\theta_r$.
\begin{proposition}[\cite{FominLMPS13quad}]\label{p:vep}
  For every positive integer $r$, $\theta_r$ has the vertex-Erdős-Pósa property
  with gap~$O(k²).$
\end{proposition}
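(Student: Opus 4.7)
The plan is to prove the statement by induction on $k=\pack{\theta_r}(G)$, using a peeling lemma of the following form: either $G$ already contains $k$ vertex-disjoint $\theta_r$-models, or one can find a vertex set $X\subseteq\vertices(G)$ of size $O_r(k)$ (the hidden constant depending on $r$) such that $\pack{\theta_r}(G\setminus X)\leq k-1$. Granted this, iterating $k$ times produces a $\theta_r$-vertex-hitting set of size $O_r(1)+O_r(2)+\cdots+O_r(k)=O_r(k^2)$, which is $O(k^2)$ for fixed $r$, matching the claimed gap.

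The natural route to the peeling lemma is a treewidth dichotomy. First I would reduce to graphs of bounded maximum degree via a preprocessing step in the spirit of Section~\ref{sec:deltabound}. Then I would invoke the grid-minor theorem: every bounded-degree graph $G$ whose treewidth exceeds some polynomial in $k$ and $r$ contains a wall minor large enough that $k$ pairwise vertex-disjoint $\theta_r$-models can be carved out directly (three consecutive rows of the wall already yield one $\theta_r$-model, and disjoint vertical slabs give disjoint models). Hence we may assume $\tw(G)=O_r(k)$. On bounded treewidth, a Robertson--Seymour style irrelevant-vertex procedure applies: pick any $\theta_r$-model $M$ in $G$, consider its branch vertices $u,v$, and apply Menger's theorem in $G$ between $u$ and $v$. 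If there are few disjoint $(u,v)$-paths a small separator $Y$ is exhibited and we set $X=V(M)\cup Y$; otherwise the abundance of disjoint paths in a bounded-treewidth graph can be leveraged to locate many pairwise vertex-disjoint $\theta_r$-models clustered around $M$, contradicting $\pack{\theta_r}(G)<k+1$.

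The main obstacle I foresee is proving that the removed set $X$ has \emph{linear} size in $k$, which is precisely what makes the final gap quadratic rather than cubic or worse. This requires either a sharp Menger-type argument inside the bounded-treewidth graph or a charging scheme where each deleted vertex is blamed on a distinct model of a maximum packing. A second subtlety is controlling $\card{V(M)}$: a minimal $\theta_r$-model may contain long subdivided paths a priori, so one should work with a canonical representative (for instance the model whose paths are shortest in $G$), which exists after the degree-reduction step. One must also ensure that $\pack{\theta_r}(G\setminus X)$ drops by one rather than by zero, which is handled by choosing $M$ minimally in a suitable well-founded order so that every $\theta_r$-model intersecting $X$ can be rerouted to intersect $M$ itself. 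Combining these ingredients with the outer induction gives $\cover{\theta_r}(G)=O_r(k^2)$, as asserted in Proposition~\ref{p:vep}.
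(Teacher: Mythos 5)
Your overall two-step plan (a treewidth dichotomy followed by a hitting-set construction on the bounded-treewidth side, summing to $O(k^2)$) has the right shape, but the second step as you describe it has a genuine gap, and it is precisely the step that carries the whole proof. The ``peeling lemma'' --- find $X$ with $\card{X}=O_r(k)$ and $\pack{\theta_r}(G\setminus X)\leq \pack{\theta_r}(G)-1$ --- is essentially as strong as the proposition itself, and your sketch does not establish it. Taking one model $M$ with branch vertices $u,v$ and applying Menger fails on both horns: if there are many disjoint $(u,v)$-paths you obtain one large $\theta_r$-model, not many pairwise \emph{vertex-disjoint} ones (for that you would need many disjoint vertex pairs, each joined by $r$ disjoint paths); and if there are few, the set $V(M)\cup Y$ is neither of size $O_r(k)$ (a minimal model may contain arbitrarily long subdivided paths, and choosing shortest paths does not bound $\card{V(M)}$ in terms of $k$ and $r$) nor guaranteed to lower the packing number. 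Separately, the degree reduction of Section~\ref{sec:deltabound} does not transfer to the vertex version: Lemma~\ref{l:bigdg} produces $k$ \emph{edge}-disjoint models that all share the hub vertex, so large degree does not force a large vertex-disjoint packing (a star has huge degree and no $\theta_2$-model at all), and you cannot assume bounded degree here --- nor do you need to.

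The argument actually used (in the cited source, and reproved in sharpened form in Sections~\ref{sec:bound-tw} and~\ref{sec:vert-edg}) replaces peeling by a balanced-separator recursion. One shows that $\tw(G)\geq h_r(k)$ forces $k$ vertex-disjoint $\theta_r$-models, with $h_r(k)=2k^2r^2$ (Proposition~\ref{p:big_tw}); otherwise a nice tree decomposition yields a separation triple $(V_1,S,V_2)$ with $\card{S}\leq \tw(G)+1\leq h_r(k)$ and $\frac{1}{3}k\leq\pack{\theta_r}(\induced{G}{V_1})\leq\frac{2}{3}k$ (Lemma~\ref{l:v_sep_tri}); recursing on both sides and using $2h_r\left(\frac{2}{3}k\right)\leq \frac{8}{9}h_r(k)$ makes the contributions decrease geometrically, giving $\cover{\theta_r}(G)=O(k^2r^2)$ (Lemma~\ref{l:small_tw}). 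If you wish to keep an iterative scheme, you would in effect have to prove your peeling lemma by such a separator argument anyway, so I recommend adopting the recursion directly.
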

However in this estimation the dependency in terms of $r$ is hidden in the multiplicative constant of the Big-O notation.
By a careful analysis of the size of a $\theta_r$-hitting
set presented in~\cite{FominLMPS13quad} (c.f.\
Lemma~\ref{l:small_tw-new}), the estimation of the gap of
Proposition~\ref{p:vep} can be made quadratic in both $k$ and~$r.$
From this, we can derive an $O(k³r³)$ edge-Erdős-Pósa gap for $\theta_r$
(Corollary~\ref{c:cubic}) by using our Lemma~\ref{l:transl} that makes
possible to translate a $\theta_r$-vertex-hitting set
into a $\theta_r$-edge-hitting~set.

However, Theorem~\ref{main} gives better estimations of this gap,
either in $k$ or in~$r.$

\paragraph{Patterns in graphs of big treewidth}
In the following section, we will use several propositions asserting
that every graph $G$ of treewidth at least $c_H$ contains some fixed
graph $H$ as a minor, where the constant $c_H$ depends on~$H.$ For
instance, we will show in Lemma~\ref{l:small_tw-new} a simple relation
between the constant $c_{k \cdot \theta_r}$ and the vertex-Erdős-Pósa
gap for~$\theta_r.$ These propositions as
stated~thereafter.

\begin{proposition}[{\cite[Lemma~3.2]{ReedW08poly}}] \label{p:one_pumpkin}
  For every integer $r \geq 1$ and graph $G,$ if $\tw(G) \geq 2r-1$
  then $G$ contains a $\theta_r$-model.
\end{proposition}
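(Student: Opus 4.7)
The plan is to prove the contrapositive: if $\theta_r \notminor G$ then $\tw(G) \leq 2r-2$. Observe first that $\theta_r \lminor G$ is equivalent to the existence of two vertices $u,v \in \vertices(G)$ joined by $r$ internally vertex-disjoint paths in $G$, because contracting each such path yields a parallel edge of $\theta_r$. By Menger's theorem, the hypothesis $\theta_r \notminor G$ is therefore equivalent to $\kappa_G(u,v) \leq r-1$ for every pair of vertices $u,v$, which in turn guarantees a vertex separator of size at most $r-1$ between any two non-adjacent vertices.

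I would then argue by induction on $|\vertices(G)|$, strengthening the inductive statement to: \emph{if $G$ is $\theta_r$-minor-free and $T \subseteq \vertices(G)$ satisfies $|T| \leq r-1$, then $G$ admits a tree decomposition of width at most $2r-2$ in which some bag contains $T$.} The base case $|\vertices(G)| \leq 2r-1$ is immediate by taking the single bag $\vertices(G)$.

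For the inductive step, since $G$ cannot be $r$-connected (else Menger would supply $r$ internally vertex-disjoint paths between any pair, producing a $\theta_r$-minor), $G$ admits a vertex separator $S$ of size at most $r-1$. Writing $C_1,\dots,C_m$ for the components of $G \setminus S$ and $G_i := \induced{G}{C_i \cup S}$, each $G_i$ inherits $\theta_r$-minor-freeness and is strictly smaller than $G$. Applying the inductive hypothesis to each $G_i$ with distinguished subset $S$ yields tree decompositions of width $\leq 2r-2$ whose bags each contain $S$, and these can be glued along a new central node whose bag is $S$ to yield the required tree decomposition of $G$.

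The main technical obstacle lies in propagating the strengthened invariant when the prescribed set $T$ straddles the separator $S$: if $T$ is neither contained in $S$ nor in one component of $G \setminus S$, then the induction cannot be applied directly to each piece with a distinguished set of size $\leq r-1$ that captures both $S$ and the relevant part of~$T$. Overcoming this requires choosing $S$ via a Menger-type refinement with respect to~$T$ (for instance, taking $S$ as a minimum $(T_1,T_2)$-separator for a suitable bipartition $T = T_1 \cup T_2$), so that at each recursion level the invariant $|T_i| \leq r-1$ is preserved. The width bound $2r-2 = 2(r-1)$ is precisely what is needed to accommodate one separator of size $r-1$ together with up to $r-1$ additional vertices carried through the recursion.
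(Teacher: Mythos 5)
The paper does not give its own proof of this proposition: it is cited verbatim from Reed and Wood \cite{ReedW08poly}. So the comparison has to be on the merits of your argument alone, and I believe it has two gaps, one cosmetic and one substantive.

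The cosmetic one: the sentence ``$\theta_r \lminor G$ is equivalent to the existence of two vertices joined by $r$ internally vertex-disjoint paths'' is false as stated. The implication from $r$ internally disjoint paths to a $\theta_r$-minor is correct (contract the paths), but the converse fails. Take the fan consisting of a path $p_1p_2p_3p_4$ together with a universal vertex~$c$: the branch sets $\{c\}$ and $\{p_1,p_2,p_3,p_4\}$ witness a $\theta_4$-minor, yet one checks that no pair of vertices admits four internally vertex-disjoint paths (the maximum local connectivity is~$3$, attained between $c$ and $p_2$ or~$p_3$). You only use the true direction downstream, so this does not break the argument, but the claimed equivalence should be dropped.

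The substantive one is exactly the obstacle you flag yourself, and I do not think the sketch resolves it. With the invariant ``$|T|\leq r-1$'' and a separator $S$ of size at most $r-1$, recursing into $G_i=\induced{G}{C_i\cup S}$ with distinguished set $S\cup(T\cap C_i)$ may force a distinguished set of size up to $2r-2$, which violates the invariant $|T|\leq r-1$; this already fails even when $T$ lies entirely inside a single component, not only when $T$ straddles~$S$. The proposed ``Menger-type refinement'' is not carried out, and there are concrete reasons to doubt it as stated: a minimum $(T_1,T_2)$-separator may simply equal $T_1$ (or $T_2$), in which case it need not disconnect $G$ at all and the recursion makes no progress; the size bookkeeping for the recursive distinguished sets is not verified; and even if the recursion goes through, it is never explained how the glued decomposition ends up with a single bag containing all of~$T$, which is what the strengthened statement requires. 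In short, the outline is a plausible template, but the precise inductive invariant (which almost certainly must allow $|T|$ on the order of $2r$, not $r-1$) and the precise choice of separator are exactly the content of the lemma, and they are missing.
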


\begin{proposition}[{\cite[Lemma~1]{FominLMPS13quad}}, see
  also~\cite{BirmeleBR07bram}] \label{p:big_tw}
 Let $k$ and $r$ be two positive integers. For every graph $G$, if
 $\tw(G) \geq 2k^2r^2$ then $G$ contains at least $k$ vertex-disjoint
 models of $\theta_{r}.$
\end{proposition}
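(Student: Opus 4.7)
The plan is induction on $k$, anchored by Proposition~\ref{p:one_pumpkin}.

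For the \emph{base case} $k=1$: since $2k^2r^2 = 2r^2 \geq 2r - 1$, Proposition~\ref{p:one_pumpkin} directly furnishes a $\theta_r$-model in $G$. For the \emph{inductive step}, assume the statement for $k-1$, and let $G$ satisfy $\tw(G) \geq 2k^2 r^2$. I aim to produce a single $\theta_r$-model $H$ in $G$ with $|V(H)| \leq 2r^2(2k-1)$; since deleting a vertex reduces treewidth by at most $1$, this guarantees
\[
\tw(G \setminus V(H)) \geq 2k^2 r^2 - 2r^2(2k-1) = 2(k-1)^2 r^2,
\]
and the inductive hypothesis applied to $G \setminus V(H)$ produces $k-1$ further pairwise vertex-disjoint $\theta_r$-models, which together with $H$ form the desired packing of $k$.

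The crux (and chief obstacle) is producing such a $\theta_r$-model of size $O(kr^2)$. The plan is to apply Proposition~\ref{p:one_pumpkin} with $R := (2k-1)r$ (valid since $2R-1 \leq 2k^2r^2$), securing a $\theta_R$-model of $G$ with inclusion-minimal branch sets $A$ and $B$; by minimality, $\induced{G}{A}$ and $\induced{G}{B}$ are trees whose leaves are the endpoints of the $R$ crossing edges, and each has at most $R = (2k-1)r$ leaves. A centroid (or heavy-path) decomposition of these two trees, applied to the $R$ endpoint-leaves, should let me isolate $r$ of the crossing edges whose endpoints cluster into subtrees $A' \subseteq A$ and $B' \subseteq B$ of total size $O(kr)$; the union $\induced{G}{A' \cup B'}$ together with those $r$ chosen crossing edges is then a $\theta_r$-model of size $O(kr)$, comfortably inside the target $O(kr^2)$.

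The delicate point---and the step I expect to absorb the most care---is that the minimal branch-set trees may contain long chains of degree-$2$ vertices that cannot be suppressed (because the relevant $G$-shortcuts do not exist), so the subtree sizes are not controlled by leaf counts alone; the centroid decomposition must weight tree-edges by their internal path-length so that such chains are accounted for, and the slack between $O(kr)$ and the required $O(kr^2)$ has to absorb the worst case. Should this direct extraction prove too technical, an alternative is to first pass to a subgraph of $G$ of minimum degree $\Omega(r)$ by iteratively peeling low-degree vertices---each peel decreases treewidth by at most the removed degree, so the bound $\tw \geq 2k^2r^2$ is essentially preserved---and then invoke a Mader-type theorem to extract a $\theta_r$-topological-minor of size $O(r)$, which is far within budget.
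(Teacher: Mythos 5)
The paper gives no proof of this proposition; it is imported as a black box from \cite[Lemma~1]{FominLMPS13quad} (see also \cite{BirmeleBR07bram}), so your argument stands or falls on its own. Your inductive frame is fine and the arithmetic $2k^2r^2-2r^2(2k-1)=2(k-1)^2r^2$ checks out, but the crux --- that $G$ contains a $\theta_r$-model on at most $2r^2(2k-1)=O(kr^2)$ vertices --- is simply false, so no amount of care in the centroid decomposition can rescue it. Concretely, let $G$ be the $n\times n$ grid with every edge subdivided $n$ times, where $n=2k^2r^2$: its treewidth is still $n$, but its girth exceeds $4n$. In any $\theta_r$-model $H\subgraph G$ with branch sets $A,B$ and cross edges $e_1,\dots,e_r$, any two cross edges lie on a common cycle of $H$ (via a path in $A$ and a path in $B$), so they are pairwise at distance $\Omega(n)$ in $H$; the balls around them are therefore disjoint and each contains $\Omega(n)$ vertices, forcing $\card{\vertices(H)}=\Omega(rn)=\Omega(k^2r^3)$, an order of magnitude beyond your budget. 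This also explains why your patch cannot work: the long bare paths in the minimal branch-set trees are not an artifact of a bad choice of model, they are unavoidable, and ``delete a small model and recurse on treewidth'' is the wrong scheme here. Your fallback is also broken on the same example: a graph of huge treewidth need not have any subgraph of minimum degree $5$ (grids are $4$-regular and have degeneracy at most $3$ or so --- deleting a low-degree vertex decreases treewidth by at most $1$, not ``by the removed degree,'' and the peeling may simply consume the whole graph), and $\theta_r$ as a \emph{topological} minor requires two vertices of degree at least $r$ in $G$, which bounded-degree high-treewidth graphs do not possess.

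The viable route --- and the one this paper itself follows in Section~\ref{sec:bound-tw} to \emph{improve} on this proposition --- is not to extract models one at a time but to first partition $G$ into $k$ vertex-disjoint subgraphs each of treewidth at least $2r-1$ and only then apply Proposition~\ref{p:one_pumpkin} inside each part. The paper does this with the Chekuri--Chuzhoy partitioning theorems (Propositions~\ref{p:cc_h} and~\ref{p:cc_r}); the cited sources achieve the weaker $2k^2r^2$ bound by elementary means (brambles, or a separator-based recursion in the spirit of Lemma~\ref{l:v_sep_tri}, which removes a bag of the tree decomposition rather than the vertex set of a model). If you want to complete a self-contained proof, that separator recursion is the step to work out; the ``small model'' shortcut does not exist.
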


 \begin{proposition}[{\cite[Theorem~1.1]{ChekuriC13larg}}]\label{p:cc_h}
   There is a function $f_{\mathrm{Prop\ref{p:cc_h}}}(t) = O(\polylog
   t)$ such that, for every graph $G$ and every positive integers $h$
   and $p,$ if $hp^2 \leq \frac{\tw(G)}{f_{\mathrm{Prop\ref{p:cc_h}}}(\tw(G))},$ there is a
   partition $G_1, \dots, G_h$ of $G$ into vertex-disjoint subgraphs
   such that $\tw(G_i) \geq p$ for each $i\in \intv{1}{h}.$
 \end{proposition}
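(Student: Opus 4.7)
The plan is to derive this partitioning statement from the polynomial grid minor theorem of Chekuri and Chuzhoy, which asserts the existence of a function $g(t)=O(\polylog t)$ such that every graph $G$ contains the $m\times m$ grid as a minor whenever $m\cdot g(\tw(G))\leq\tw(G)$. Concretely, I would extract one sufficiently large grid minor from $G$, tile it into $h$ smaller $p\times p$ subgrids, and use the corresponding unions of branch sets as the skeleton of the desired partition of $\vertices(G)$.

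Set $m:=p\lceil\sqrt{h}\rceil$ and define $f_{\mathrm{Prop\ref{p:cc_h}}}:=c\cdot g$ for a sufficiently large absolute constant~$c$. Using $\lceil\sqrt{h}\rceil\leq\sqrt{h}+1\leq 2\sqrt{h}$, one checks that the hypothesis $hp^{2}\leq\tw(G)/f_{\mathrm{Prop\ref{p:cc_h}}}(\tw(G))$ implies $m\cdot g(\tw(G))\leq\tw(G)$, so by the grid minor theorem $G$ contains an $m\times m$ grid $\Gamma$ as a minor. Tile $\Gamma$ by a regular $\lceil\sqrt{h}\rceil\times\lceil\sqrt{h}\rceil$ array of axis-aligned $p\times p$ subgrids and retain any $h$ of them, say $\Gamma_{1},\dots,\Gamma_{h}$. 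For each $i\in\intv{1}{h}$, let $B_{i}\subseteq\vertices(G)$ be the union of the branch sets of the chosen $\Gamma$-model in $G$ that correspond to the vertices of $\Gamma_{i}$. By construction the sets $B_{1},\dots,B_{h}$ are pairwise disjoint and $\induced{G}{B_{i}}$ contains $\Gamma_{i}$ as a minor; since the $p\times p$ grid has treewidth $p$, this yields $\tw(\induced{G}{B_{i}})\geq p$.

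To conclude, I would extend $(B_{1},\dots,B_{h})$ to a partition $(V_{1},\dots,V_{h})$ of $\vertices(G)$ by assigning each remaining vertex arbitrarily to some $V_{i}$, and set $G_{i}:=\induced{G}{V_{i}}$. Since treewidth is monotone under taking supergraphs on the same vertex set, $\tw(G_{i})\geq\tw(\induced{G}{B_{i}})\geq p$ for every $i$, as required. The main obstacle of this plan is not the tiling step: it is the underlying polynomial grid minor theorem itself, whose proof is the central technical contribution of Chekuri and Chuzhoy and relies on delicate arguments about well-linked sets, good routers, and recursive decompositions of tree decompositions. Once that deep tool is granted, the partitioning statement follows by the elementary argument above.
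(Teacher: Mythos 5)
The paper does not prove Proposition~\ref{p:cc_h}; it simply imports it verbatim from Chekuri and Chuzhoy~\cite[Theorem~1.1]{ChekuriC13larg}. Your proposal tries to \emph{derive} it from the polynomial grid-minor theorem, but this derivation has a fatal flaw: you have misstated the grid-minor theorem. You claim that there is a function $g(t)=O(\polylog t)$ such that $G$ contains an $m\times m$ grid minor whenever $m\cdot g(\tw(G))\leq\tw(G)$, i.e.\ that the grid side one can extract is $\Omega\bigl(\tw(G)/\polylog\tw(G)\bigr)$. No such result exists. What Chekuri and Chuzhoy actually prove in~\cite{ChekuriC13poly} is that a grid of side $m$ is guaranteed once $\tw(G)\geq q(m)$ for some \emph{polynomial} $q$ of large degree (on the order of $m^{36}$ in the original proof, improved to roughly $m^{9}$ in later work, but certainly not $m^{1+o(1)}$). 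With the true bound, taking $m=p\lceil\sqrt h\rceil$ would require $\tw(G)\gtrsim (p\sqrt h)^{d}$ for some $d\geq 9$, which is vastly more than the hypothesis $\tw(G)\geq hp^{2}\cdot f_{\mathrm{Prop\ref{p:cc_h}}}(\tw(G))$ can supply when $h$ or $p$ is large. The tiling and branch-set argument that follows is fine as elementary bookkeeping, but it rests on an input it cannot obtain.

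There is also a dependency problem: the partitioning theorem you are asked to justify (\cite[Theorem~1.1]{ChekuriC13larg}) is in fact one of the tools Chekuri and Chuzhoy use on the way to their polynomial grid-minor theorem~\cite{ChekuriC13poly}, not the other way around. Deriving the partitioning theorem from the grid-minor theorem reverses the logical order and, given the loss of exponents noted above, would produce a strictly weaker statement even if it compiled. Precisely because the grid-minor route is lossy, Chekuri and Chuzhoy's Theorem~1.1 is proved directly via well-linked sets and recursive decompositions, and the present paper treats it as a black box citation. The correct move here is simply to cite the theorem, as the paper does.
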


 \begin{proposition}[{\cite[Theorem~1.2]{ChekuriC13larg}}]\label{p:cc_r}
   There is a function $f_{\mathrm{Prop\ref{p:cc_r}}}(t) = O(\polylog
   t)$ such that, for every graph $G$ and every positive integers $h$
   and $p,$ if $h^3p \leq \frac{\tw(G)}{f_{\mathrm{Prop\ref{p:cc_r}}}(\tw(G))}$ then there is a partition
   $G_1, \dots, G_h$ of $G$ into vertex-disjoint subgraphs such that
   $\tw(G_i) \geq p$ for each $i\in \intv{1}{h}.$
 \end{proposition}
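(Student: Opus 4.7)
The plan is to follow the strategy of Chekuri and Chuzhoy~\cite{ChekuriC13larg}, which threads together three ingredients: extraction of a large well-linked substructure from $G,$ a recursive balanced-separator decomposition of that substructure into $h$ well-linked pieces, and a multicommodity-flow-based lift of the decomposition to a vertex partition of $G$ with the desired treewidth lower bounds.

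First, extract a well-linked set. The polynomial excluded-grid theorem of~\cite{ChekuriC13larg} produces, as an intermediate output, a subset $S \subseteq \vertices(G)$ of size $\Omega(\tw(G)/\polylog(\tw(G)))$ which is \emph{well-linked} in $G$: for any two disjoint equal-size subsets $A, B \subseteq S,$ there are $\card{A}$ vertex-disjoint $A$--$B$ paths in $G,$ up to a $\polylog$ congestion slack. The $\polylog$ loss here is exactly what is absorbed into the function $f_{\mathrm{Prop\ref{p:cc_r}}}$ of the statement.

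Next, split $S$ into $h$ well-linked subsets $S_1, \dots, S_h,$ each of size $\Omega(\card{S}/h^3),$ by repeatedly finding an approximately balanced separator inside the current well-linked set and recursing on both sides. Each cut costs a $\polylog$ factor in the well-linkedness parameter and a factor proportional to the separator size relative to the parts produced; accumulated over $O(\log h)$ recursion levels, the total multiplicative loss is polynomial in $h$ — the exponent $3$ being exactly what the cut-matching game and the $O(\log n)$ flow-cut gap for multicommodity flow can afford in this regime. Then lift $(S_i)_{i \in \intv{1}{h}}$ to a vertex partition $G_1, \dots, G_h$ of $G$ by routing every vertex of $\vertices(G) \setminus \bigcup_i S_i$ into one of the parts via a multicommodity-flow argument, maintaining that $S_i$ remains well-linked inside $G_i.$ Since $\card{S_i} = \Omega(p \cdot \polylog p)$ and well-linkedness of a set certifies treewidth at least its size (up to $\polylog$), this yields $\tw(G_i) \geq p$ for each $i \in \intv{1}{h},$ as required.

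The main obstacle is the recursive partition step: arguing that a sufficiently balanced separator always exists with only a $\polylog$ loss in the well-linkedness parameter, and controlling the compounded loss across $O(\log h)$ levels, is the technical core of the argument and is exactly what dictates the cubic dependence in $h$ in the hypothesis $h^3 p \leq \tw(G)/f_{\mathrm{Prop\ref{p:cc_r}}}(\tw(G)).$ The weaker statement of Proposition~\ref{p:cc_h}, with hypothesis $h p^2 \leq \tw(G)/f_{\mathrm{Prop\ref{p:cc_h}}}(\tw(G)),$ corresponds to the regime where the same strategy is carried out asymmetrically, balancing the $h$-loss against the $p$-loss differently.
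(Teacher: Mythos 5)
This proposition is not proved in the paper at all: it is imported verbatim as Theorem~1.2 of \cite{ChekuriC13larg} and used as a black box, so there is no internal argument to compare yours against. Judged on its own terms, your proposal is an outline of the Chekuri--Chuzhoy strategy rather than a proof: each of its three steps is itself a substantial theorem, and the decisive one is asserted rather than established. Concretely, the claim that a well-linked set $S$ can be split into $h$ well-linked subsets of size $\Omega(\card{S}/h^3)$ with only polylogarithmic degradation of the well-linkedness parameter \emph{is} the content of Theorem~1.2; saying that ``the exponent $3$ is exactly what the cut-matching game and the flow-cut gap can afford'' presupposes the quantitative form of the conclusion instead of deriving it. The lifting step has the same problem: you must show that the vertices of $\vertices(G) \setminus \bigcup_i S_i$ can be distributed among the parts while each $S_i$ stays well-linked \emph{inside} $G_i$ (well-linkedness in $G$ does not survive an arbitrary partition), and this is again a nontrivial routing argument in \cite{ChekuriC13larg}, not a corollary of standard multicommodity-flow facts.

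Two further inaccuracies. First, the extraction step does not rest on the polynomial excluded-grid theorem --- that result appears in \cite{ChekuriC13poly}, which postdates and \emph{builds on} the decomposition theorems you are trying to prove, so invoking it here is circular with respect to the logical order of the literature; a graph of treewidth $t$ contains a well-linked set of size $\Omega(t)$ already by Reed's bramble/tangle characterization of treewidth, with no polylog loss at that stage (the polylog in $f_{\mathrm{Prop\ref{p:cc_r}}}$ arises in the partitioning, not the extraction). Second, your closing remark that Proposition~\ref{p:cc_h} is ``the same strategy carried out asymmetrically'' is speculation that explains neither exponent; the two tradeoffs $hp^2$ and $h^3p$ come out of distinct arguments in \cite{ChekuriC13larg}. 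Since the present paper only needs these statements as black boxes, the appropriate course is to cite them, as the authors do, rather than to sketch a re-proof whose technical core is left as an assertion.
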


\section{The edge-\texorpdfstring{Erdős-Pósa}{Erdös-Posa} property
 for \texorpdfstring{graphs $\theta_r$}{pumpkin graphs}}
\label{sec:eep}

\subsection{Bounding the degree}
\label{sec:deltabound}

In the sequel, we deal with graphs in which some vertices
are marked.
If $G$ is a graph and $m \colon \vertices(G) \to \{0,1\}$ is a
function, we say that $\marked{G, m}$ is a graph \emph{marked} by $m.$
A vertex $v$ of $G$ such that $m(v) = 1$ is said to be~\emph{marked}.
We denote by $\mu$ the function that, given a graph,
returns its number of marked vertices.
We now define an $r$-good partition.
Given a positive integer $r$, a marked tree $\marked{T, m}$ is said to have an \emph{$r$-good partition
  of root $v$} if there is a pair~$\paren{\marked{T_1, m_1},
  \marked{T_2, m_2}}$ of marked trees such that:
 \begin{enumerate}[(i)]
 \item $T_1$ and $T_2$ are subtrees of $T$ such that $(\edges(T_1),
  \edges(T_2))$ is a partition of $\edges(T)$;\label{e:part}
 \item $r \leq \mu\paren{\marked{T_1, m_1}} \leq 2r$; \label{e:1lt}
 \item $v \in \vertices(T_2)$; and \label{e:2lt}
 \item every vertex that is marked in $\marked{T, m}$ is either
  marked in $\marked{T_1, m_1}$
  or marked in $\marked{T_2, m_2}$, but not in both. In other words,
  for every $u \in \vertices(T),$
  \begin{itemize}
  \item if $v \in \vertices(T_1) \cap \vertices(T_2)$ then $m(v) = 1
   \Leftrightarrow m_1(v) = 1\ \text{or}\ m_2(v) = 1$ but not both;
 \item otherwise, let $i \in \{1,2\}$ be the integer such that $v \in
   \vertices(T_i)$. Then we have $m(v) = m_i(v)$.
  \end{itemize} \label{e:mm}
 \end{enumerate}

 We remark that because of~(\ref{e:mm}), $\mu(T) = \mu(T_1) + \mu(T_2).$
 If for every $v \in \vertices(T),$ $\marked{T, m}$ has an $r$-good partition of
 root $v$, then $T$ is said to have an \emph{$r$-good partition}.
 Examples of a marked tree and of a good partition are given in Figure~\ref{fig:parti}.

 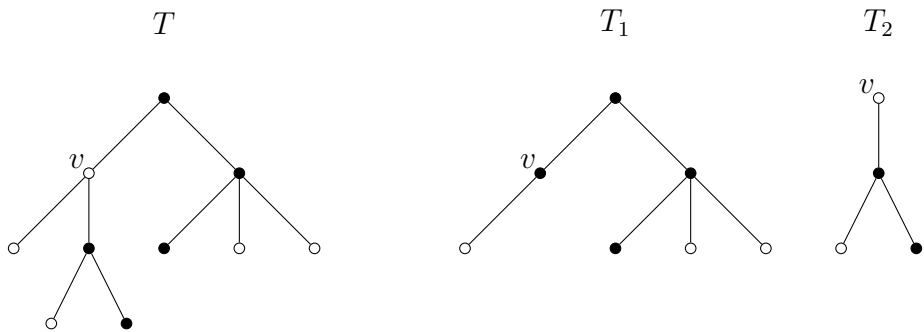
\begin{figure}[h]
   \centering
   \begin{tikzpicture}
     \begin{scope}
       \node[normal] at (0,1) {$T$};
       \node (n0) at (0,0) {};
       \node[fill = white, label=135:$v$] (n1) at (-1,-1) {};
       \node (n2) at (1,-1) {};
       \node[fill = white] (n3) at (-2,-2) {};
       \node (n4) at (-1, -2) {};
       \node (n5) at (0, -2) {};
       \node[fill = white] (n6) at (1, -2) {};
       \node[fill = white]  (n7) at (2, -2) {};
       \node[fill = white] (n8) at (-1.5, -3) {};
       \node (n9) at (-0.5, -3) {};
       \draw (n3) -- (n1) -- (n0) -- (n2) -- (n5) (n6) -- (n2) -- (n7) (n1) -- (n4) -- (n8) (n4) -- (n9);
     \end{scope}
     \begin{scope}[xshift = 6cm]
       \node[normal] at (0,1) {$T_1$};
       \node (n0) at (0,0) {};
       \node[label=135:$v$] (n1) at (-1,-1) {};
       \node (n2) at (1,-1) {};
       \node[fill = white] (n3) at (-2,-2) {};
       \node (n5) at (0, -2) {};
       \node[fill = white] (n6) at (1, -2) {};
       \node[fill = white] (n7) at (2, -2) {};
       \draw (n3) -- (n1) -- (n0) -- (n2) -- (n5) (n6) -- (n2) -- (n7);
     \end{scope}
     \begin{scope}[xshift = 10.5cm, yshift = 1cm]
       \node[normal] at (-1,0) {$T_2$};
       \node[fill = white, label=135:$v$] (n1) at (-1,-1) {};
       \node (n4) at (-1, -2) {};
       \node[fill = white] (n8) at (-1.5, -3) {};
       \node (n9) at (-0.5, -3) {};
       \draw (n1) -- (n4) -- (n8) (n4) -- (n9);
     \end{scope}
   \end{tikzpicture}
   \caption{A marked tree $T$ with $\mu(T)=5$ and a $3$-good partition $(T_1,T_2)$ of root $v$. Marked vertices appear in~white.}
   \label{fig:parti}
 \end{figure}

\begin{lemma}\label{l:rgp}
For every integer $r>0$ and every marked tree $\marked{T, m}$, if
$\mu(T) \geq 2r$ then $\marked{T, m}$ has an $r$-good partition.
\end{lemma}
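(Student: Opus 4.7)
My plan is to fix an arbitrary vertex $v \in \vertices(T)$ and construct an $r$-good partition of root $v$ by cutting off a single subtree of $T$ whose number of marked vertices lies in the window $[r, 2r]$. Root $T$ at $v$, write $S_u$ for the subtree rooted at $u$, and set $f(u) := \mu(S_u)$, so that $f(v) = \mu(T) \geq 2r$ by hypothesis. Starting from $v$, I iteratively descend to any child whose $f$-value is at least $r$ for as long as such a child exists, and let $u$ denote the stopping vertex; thus $f(u) \geq r$ while $f(c) \leq r - 1$ for every child $c$ of $u$.

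The easy case is $u \neq v$ together with $f(u) \leq 2r$: I take $T_1 := S_u$ and let $T_2$ consist of the remaining edges of $T$ together with the shared vertex $u$. Conditions (i) and (iii) are immediate, the latter because the path from $u$'s parent up to $v$ lies in $T_2$. Condition (iv) is realized by the one available choice—assigning $m(u)$, if $u$ is marked, to $T_1$ or $T_2$—which easily forces $\mu(T_1) \in [r, 2r]$.

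In the remaining case ($u = v$ or $f(u) > 2r$), I enumerate the children of $u$ as $c_1, \ldots, c_k$ and form the prefix sums $F_j := \sum_{\ell = 1}^{j} f(c_\ell)$. Because each jump $f(c_\ell)$ is at most $r - 1$ and $F_k = f(u) - m(u) \geq r$, the smallest index $i$ with $F_i \geq r$ exists and satisfies $F_i \leq 2r - 2$. I then define $T_1$ to be the subtree consisting of $u$ together with $S_{c_1}, \ldots, S_{c_i}$ and the connecting edges $(u, c_j)$ for $j \leq i$, and let $T_2$ be the complement; routing $m(u)$ (if any) to the appropriate side pins $\mu(T_1)$ inside $[r, 2r]$.

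The only step that demands real care, in my view, is checking condition (iii) in the subcase $u = v$: I must ensure that some edge $(v, c_\ell)$ with $\ell > i$ survives in $T_2$, so that $v \in \vertices(T_2)$. This amounts to showing $i < k$, which follows by combining $F_i \leq 2r - 2$ with $F_k = \mu(T) - m(v) \geq 2r - 1$. The analogous inequality $i < k$ in the other branch of the case ($u \neq v$ and $f(u) > 2r$) is immediate from $F_k \geq 2r$. Everything else reduces to bookkeeping on the $f$-values.
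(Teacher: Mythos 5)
Your proof is correct, and it takes a genuinely different route from the paper's. The paper proceeds by strong induction on $\card{\vertices(T)}$, first dispatching the boundary case $\mu(T)=2r$, and then branching on the degree of the root $v$: if $\dg(v)=1$ it recurses on $T\setminus\{v\}$, and if $\dg(v)>1$ it distinguishes whether some component of $T\setminus\{v\}$ carries more than $2r$ marked vertices (recurse into it), some carries between $r$ and $2r$ (take it as $T_1$), or all carry fewer than $r$ (greedily aggregate). Your argument replaces all of this with a single non-recursive descent: after rooting at $v$ you walk down to a vertex $u$ with $f(u)\geq r$ but $f(c)\leq r-1$ for every child $c$, and then either $S_u$ itself is in range (your Case~1, mirroring the paper's ``Subcase (b)'') or you run the prefix-sum aggregation over the children of $u$ (your Case~2, mirroring ``Subcase (c)''). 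The descent absorbs the paper's base case, its $\dg(v)=1$ recursion, and its ``Subcase (a)'' in one step, which makes the proof shorter and easier to audit. The one place that requires genuine care is exactly the one you flagged: ensuring $v\in\vertices(T_2)$ when $u=v$, which you settle correctly with $F_i\leq 2r-2 < 2r-1 \leq F_k$. Two small points worth spelling out if you write this up: (a) in Case~1, to satisfy condition~(ii) you should \emph{always} route $m(u)$ to $T_1$, giving $\mu(T_1)=f(u)\in[r,2r]$ with no further case split; and (b) in Case~2 one should note that $u$ necessarily has children — otherwise $f(u)=m(u)\leq 1$ rules out both $u=v$ (since then $\mu(T)\leq 1<2r$) and $f(u)>2r$ — so the sums $F_j$ are well defined.
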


\begin{proof}
Let $r>0$ be an integer. We prove this lemma by induction on the size of the tree.

\noindent \emph{Base case:} $\card{\vertices(T)} = 0$. Since $2r \geq 2 > \card{\vertices(T)},$ $T$ does not have $2r$ marked
 vertices and we are done.

\noindent \emph{Induction step:}
Assume that for every integer $n' < n,$ every marked tree $\marked{T',
 m'}$ on $n'$
vertices and satisfying $\mu(\marked{T', m'}) \geq 2r$ has an $r$-good partition
(induction hypothesis).

Let us prove that every marked tree on $n$ vertices has a $r$-good partition
if it has at least $2r$ marked vertices.
Let $\marked{T, m}$ be a tree on $n$ vertices and let $v$ be a vertex of $T.$ We
assume that $\mu(\marked{T, m}) \geq 2r.$ We distinguish two cases.

\begin{itemize}
\item $\mu(\marked{T, m}) = 2r$:

 Let $T_1 = T,$ let $m_1 = m,$ let $T_2 = (\{v\}, \emptyset)$, and
 let $m_2 \colon \vertices(T_2) \to \{0,1\}$ be the function equal to
 0 on every vertex of $T_2.$
 Remark that $(\edges(T_1), \edges(T_2)) = (\edges(T), \emptyset)$
 is a partition of $\edges(T)$, $T_2$ contains $v$, and as $\marked{T,
  m}$ contains
 (exactly) $2r$ marked vertices, so does $\marked{T_1, m}.$ Consequently
 $(\marked{T_1, m_1},\marked{T_2, m_2})$ is an $r$-good partition of
 $\marked{T, m}.$

\item $\mu(\marked{T, m}) > 2r$:

We distinguish different cases depending on the degree of the root~$v$ in $T.$
\begin{itemize}
\item[Case 1:]~$\dg(v) = 1$.

Let $u$ be the neighbor of $v$ in $T,$ let $T' = T \setminus \{v\}$,
and $m' = m_{|\vertices(T')}.$
Remark that $\mu(\marked{T',m'}) \geq 2r$ and $\card{\vertices(T')} =
\card{\vertices(T)} - 1.$ By induction hypothesis, $\marked{T', m'}$ has
an $r$-good partition $(\marked{T'_1, m_1'}, \marked{T'_2, m_2'})$ of root
$u.$ We extend it to $T$ by setting $T_1 = T'_1$, $m_1 = m_1'$, $T_2
= \paren{\vertices(T'_2) \cup \{v\}, \edges(T'_2) \cup \{v,u\}}$, and~$m_2 = m_2'$.
Notice that $T_2$ contains $v.$ As the subtree $T'_1$ contains at
least $r$ and at most $2r$ marked vertices (induction hypothesis), so
does $T_1$. Also, remark that $(\edges(T_1), \edges(T_2))$ is a
partition of $\edges(T)$ and that since $u \in T'_2,$ the graph $T_2$
is connected. Therefore $((T_1, m_1), (T_2, m_2))$ is an $r$-good partition
of~$T$.

\item[Case 2:]~$\dg(v) = d > 1.$

Let $u_1, \dots, u_d$ be the neighbors of $v$ in $T$ and for every $i
\in \intv{1}{d},$ let $C_i$ be the connected component of $T \setminus
\{v\}$ that contains $u_i.$ We also define, for every $i \in \intv{1}{d},$ the
restricted marking function $w_i = m_{|V(C_i)}.$

\begin{enumerate}[{Subcase} (a):]
\item there exists $i \in \intv{1}{d}$ such that $\mu(\marked{C_i, w_i}) > 2r.$

 Let $T' = (\vertices(C_i) \cup\{v\}, \edges(C_i) \cup \{u,v\})$ and
 let $m' = m_{|\vertices(T')}.$
 Remark that $\card{\vertices(T')} < \card{\vertices(T)}$ and
 $\mu(\marked{T', m'}) > 2r.$
 According to the induction hypothesis, $\marked{T', m'}$ has an $r$-good partition
 $(\marked{T_1', m_1'}, \marked{T_2', m_2'})$ of root $v$. Similarly as
 before, we can extend it into an $r$-good partition
 $(\marked{T_1, m_1}, \marked{T_2, m_2})$ of $\marked{T, m}.$ This is
 done by setting:
 \begin{align*}
  T_1 &= T'_1,\\
  m_1 &= m_1',\\
  T_2 &= (\vertices(T'_2) \cup (\vertices(G) \setminus \vertices(C_i)), \edges(G) \setminus \edges(T'_1)), \text{ and}\\
  m_2 &:\left \{
   \begin{array}{l}
    u \mapsto m_2'(u)\quad\text{if}\ u \in \vertices(C_i)\cup \{v\} \\
    u \mapsto m(u)\quad\text{otherwise}.
   \end{array}
\right .
 \end{align*}
As $(\marked{T_1', m_1'}, \marked{T_2', m_2'})$ is an $r$-good
partition of root $v$, $v \in \vertices(T_2')$ and therefore
$T_2$ is connected.

\item there exists $i \in \intv{1}{d}$ such that $r \leq \mu(\marked{C_i, w_i}) \leq 2r.$

 Let $T_1 = C_i$ and $T_2 = \induced{T}{\edges(T) \setminus
  \edges(T_1)}.$ In this case, $(\edges(T_1), \edges(T_2))$ is a partition
 of $\edges(T)$ and $T_2$ is connected since it contains $v$, the
 vertex which is adjacent to the $C_j$'s. Thus, if we set $m_1 =
 m_{|V(T_1)}$ and $m_2 = m_{|V(T_2)}$, $(\marked{T_1, m_1},
 \marked{T_2, m_2})$ is an $r$-good partition of $\marked{T, m}.$

\item for all $i \in \intv{1}{d}, \mu(\marked{C_i, w_i}) < r.$

 Let $j = \min \acc{j \in \intv{2}{d},\ \sum_{i = 1}^j \mu(\marked{C_i, w_i}) \geq
  r}$. Such value exists since $\mu((T,m)) > 2r$. We then set:
\begin{align*}
 T_1 &= (\cup_{i \in \intv{1}{j}} \vertices(C_i) \cup
 \{v\}, \cup_{i \in \intv{1}{j}} (\edges(C_i) \cup \{v, u_i\})),\\
 m_1 &: \left \{
  \begin{array}{l}
   v \mapsto 0\\
   u \in \vertices(T_1) \setminus \{v\} \mapsto m(u)
  \end{array}
 \right .,\\
 T_2 &= \induced{T}{\edges(T) \setminus \edges(T_1)}, \text{ and}\\
 m_2 &= m_{|V(T_2)}.
\end{align*}

By definition of $j,$
$\mu(\marked{T_1, m_1}) \geq r$ and as for every $i \in
\intv{1}{d},$
$\mu(\marked{C_i, w_i}) < r$ we also have
$\mu(\marked{T_1, m_1}) < 2r.$ As before, the pair
$(\marked{T_1, m_1}, \marked{T_2, m_2})$
is an $r$-good partition of $\marked{T, m}.$
\end{enumerate}
\end{itemize}
\end{itemize}

In conclusion, we proved by induction that for every integer $r,$ every
tree having at least $2r$ marked vertices has an $r$-good partition.
\end{proof}

In the sequel we will deal with packings of the graph $\theta_{r},$
for $r>1.$ The following remark is important.
\begin{remark}\label{r:2co}
 If $G$ is not biconnected, the number of edge-disjoint models of
 $\theta_{r}$ in $G$ is equal to the sum of the number of edge-disjoint models of
 $\theta_{r}$ in every biconnected component of $G.$
 This enables us to treat biconnected components separately.
\end{remark}

\begin{lemma}\label{l:bigdg}
 Let $k>0, r>0$ be two integers, and let $G$ be a biconnected graph
 with $\maxdeg(G) \geq 2kr$. Then $\edpack{\theta_r}(G) \geq k$.
\end{lemma}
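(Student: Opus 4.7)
The plan is to apply Lemma~\ref{l:rgp} iteratively to a spanning tree of $G\setminus\{v\},$ where $v$ realises $\maxdeg(G),$ and to turn each extracted piece into one of the sought edge-disjoint $\theta_{r}$-models. The degenerate case $\card{\vertices(G)}\leq 2$ I dispatch immediately: biconnectedness together with $\maxdeg(G)\geq 2kr$ forces $G$ to consist of two vertices joined by at least $2kr$ parallel edges, whose grouping into $k$ packets of~$r$ yields the models. The case $r=1$ is also trivial, as any $k$ distinct edges incident to~$v$ form $k$ edge-disjoint $\theta_{1}$-models. From now on I assume $r\geq 2$ and $\card{\vertices(G)}\geq 3,$ so that $G\setminus\{v\}$ is connected and admits a spanning tree~$T.$

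Enumerate the $d\geq 2kr$ edges of~$G$ incident with~$v$ as $e_1,\dots,e_d$ with other endpoints $u_1,\dots,u_d$ (repetitions allowed, since~$G$ may carry multi-edges between~$v$ and a single vertex). To absorb this edge-multiplicity into distinct vertex marks, I would pass to an auxiliary marked tree: obtain~$T^{*}$ from~$T$ by adding, for each $i\in\intv{1}{d},$ a fresh pendant~$w_i$ attached to~$u_i,$ and define $m(w_i)=1$ and $m\equiv 0$ on~$\vertices(T).$ Then $\mu\paren{\marked{T^{*},m}}=d\geq 2kr,$ so Lemma~\ref{l:rgp} applies. Iterating it on successive residuals (each step splitting off a subtree $T^{(j)}$ with $r\leq\mu(T^{(j)})\leq 2r$ while the residual loses at most~$2r$ marks) yields, after $k-1$ steps, pairwise edge-disjoint subtrees $T^{(1)},\dots,T^{(k-1)}$ of~$T^{*}$ each carrying between~$r$ and~$2r$ marks, together with a final residual $T^{(k)}$ still carrying at least $2kr-2(k-1)r=2r\geq r$ marks.

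From each~$T^{(j)}$ I would then build the $j$-th model. Set $W_j=\acc{w_i \colon w_i\in\vertices(T^{(j)})}$ and $A_j=\vertices(T^{(j)})\cap\vertices(G).$ Since $r\geq 2,$ each~$T^{(j)}$ has at least two vertices, so every $w_i\in W_j$ is non-isolated and thus contributes its unique incident edge $\{w_i,u_i\}$ to~$T^{(j)},$ putting $u_i\in A_j.$ Stripping the leaves~$W_j$ from~$T^{(j)}$ leaves a subtree of~$T$ spanning~$A_j,$ so $A_j$ is non-empty and connected in~$G.$ Take~$H_j$ to consist of the vertices $\{v\}\cup A_j$ and the edges $(\edges(T^{(j)})\cap\edges(T))\cup\acc{e_i\colon w_i\in W_j}$: this subgraph has two connected branch sets~$\{v\}$ and~$A_j$ joined by at least $\card{W_j}\geq r$ parallel cross-edges, hence contracts onto~$\theta_{r'}$ for some $r'\geq r,$ and trimming $r'-r$ cross-edges produces a genuine $\theta_{r}$-model. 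Edge-disjointness across~$j$'s follows from the edge-disjointness of the~$T^{(j)}$'s (for tree-edges) together with the fact that each leaf~$w_i,$ owning a unique incident edge in~$T^{*},$ can belong to at most one~$T^{(j)}$ (for the cross-edges~$e_i$). The main obstacle I anticipate is the handling of parallel edges at~$v,$ which the subdivision device resolves cleanly by turning edge-multiplicity into distinct marked leaves, so that Lemma~\ref{l:rgp} can be applied without any weighted-marking generalisation.
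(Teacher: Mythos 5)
The proposal is correct and follows the same strategy as the paper's proof: delete a vertex $v$ of maximum degree (biconnectedness keeps $G\setminus\{v\}$ connected), take a tree in $G\setminus\{v\}$ with marks recording $v$'s attachments, iterate Lemma~\ref{l:rgp} to extract $k$ edge-disjoint marked subtrees each carrying at least $r$ marks, and lift each to a $\theta_r$-model by adjoining $v$ together with the cross-edges to that subtree. Your pendant-leaf device — one marked leaf per edge incident to $v$, rather than marking $\neigh_G(v)$ directly as the paper does — is a harmless refinement that makes the handling of parallel edges at $v$ and the edge-disjointness of the resulting cross-edges more transparent, but it does not change the substance of the argument.
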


\begin{proof}
As $G$ is biconnected, the removal of a vertex $v$ of maximum degree
gives a connected graph. Let $T$ be a minimal tree of $G \setminus
\{v\}$ spanning the neighborhood $\neigh_G(v)$ of $v.$ We mark the
vertices of $T$ that are elements of $\neigh_G(v)$: this gives the
marking function $m$ for $T.$
Let us prove by induction on $k$ that $\marked{T, m}$ has $k$
edge-disjoint marked subtrees $\marked{T_1, m_1}, \dots, \marked{T_k,
 m_k},$ each containing at least $r$ marked vertices. If we do so,
then we are done because $\acc{\{v\}, T_i}_{i \in \intv{1}{k}}$ is a
collection of $k$ edge-disjoint $\theta_r$ models. In fact, as for
every $i \in \intv{1}{k},$ $T_i$ contains $r' \geq r$ vertices
adjacent to $v$ in $G,$ contracting the edges of $T_i$ in
$\induced{G}{\{v\} \cup \vertices(T_i)}$ gives the graph
$\theta_{r'}.$ Let $r >0$ be an integer.

\noindent \emph{Base case $k = 1$:} Clear.

\noindent \emph{Induction step $k > 1$:} Assume that for every $k' <
k,$ every tree with at least $2k'r$ vertices marked has $k'$
edge-disjoint subtrees, each with at least $r$ marked vertices.
Let $\marked{T, m}$ be a marked tree such that $\mu(\marked{T, m}) \geq 2kr.$
According to Lemma~\ref{l:rgp}, $\marked{T, m}$ has an $r$-good
partition $(\marked{T_1, m_1}, \marked{T'_1, m_1'})$ such that $r \leq
\mu(\marked{T_1, m_1}) \leq 2r$ and $\mu(\marked{T'_1, m_1'}) =
\mu(\marked{T, m}) - \mu(\marked{T_1, m_1}) \geq 2(k-1)r.$
By induction hypothesis, $\marked{T'_1, m_1'}$ has $k-1$ edge-disjoint marked
subtrees $\marked{T_2, m_2}, \dots, \marked{T_k, m_k}$ each containing at least $r$ marked
vertices. Remark that as all these trees are subgraphs of $T'_1,$ which
is edge-disjoint from $T_1$ in $T$, they are edge-disjoint from $T_1$
as well. Consequently, $\marked{T_1, m_1},$
$ \marked{T_2, m_2}, \dots,$
$\marked{T_k, m_k}$ is the family of edge-disjoint subtrees we were looking~for.
\end{proof}

\subsection{Bounding the treewidth}
\label{sec:bound-tw}

\begin{lemma}\label{l:bound1}
  There is a function $h_r(k) = O(k r^2 \polylog kr)$ such that for every
  positive integers $k$ and $r$ and every graph $G$, if $\tw(G) \geq h_r(k)$,
then $\pack{\theta_r}(G) \geq k$.
\end{lemma}

\begin{proof}
  Let $G$ be a graph and $k,r$ be two positive integers. By
  Proposition~\ref{p:big_tw}, if $\tw(G) \geq 2k²r²,$ then $G$
  contains $k$ vertex-disjoint models of $\theta_r.$ Therefore, we
  only have to consider the case where $\tw(G) < 2k²r².$
  
  As $f_{\mathrm{Prop\ref{p:cc_h}}}(t) = O(\polylog t)$ (\textit{cf.} Proposition~\ref{p:cc_h} for
  the definition of $f_{\mathrm{Prop\ref{p:cc_h}}}$), there are three positive
  reals $t_0$, $A\geq 1$, and $\alpha \geq 1$ such that for every real $t \geq t_0$
  we have $f_{\mathrm{Prop\ref{p:cc_h}}}(t) \leq A
  \log^\alpha(t)$. Let $B = \max(0, \max_{i \in \intv{1}{\ceil{t_0}}} f_{\mathrm{Prop\ref{p:cc_h}}}(i))$ and
  observe that for every positive integer $i$ we have $f_{\mathrm{Prop\ref{p:cc_h}}}(i) \leq A
  \log^\alpha(i) + B$.

  Let $h_r(k) = k(2r)^2 \cdot (A
  \log^\alpha(2k²r²) + B)$ for every positive integers $k$
  and~$r$. Observe that $h_r(k) = O(k r^2 \polylog kr)$.
  We will show that graphs whose treewidth is at least $h_r(k)$
  contain $k$ vertex-disjoint models of~$\theta_r$.
  For every positive integers $r$ and $k$, if $\tw(G) \geq h_r(k)$ then we have
  \begin{align*}
        \tw(G)&\geq k(2r)^2 \cdot (A \log^\alpha(\tw(G)) + B)&\text{(as we assume $\tw(G)
      < 2k²r²$)}\\
    \frac{\tw(G)}{A \log^\alpha(\tw(G)) + B} &\geq k(2r)^2 & \text{(because
      $A \log^\alpha(\tw(G)) + B$ is positive)}\\
    \frac{\tw(G)}{f_{\mathrm{Prop\ref{p:cc_h}}}(\tw(G))} &\geq k(2r)^2
                                                           &\text{(as
                                                             $\tw(G)$
                                                             is integer)}.
  \end{align*}
 
  Notice that $k$ and $2r$ meet the conditions of
Proposition~\ref{p:cc_h}. Consequently, there is a partition $G_1,
\dots, G_k$ of $G$ into vertex-disjoint subgraphs such that $\forall i
\in \intv{1}{k},\ \tw(G_i) \geq 2r.$ By
Proposition~\ref{p:one_pumpkin}, each of these subgraphs contains a
model of $\theta_r.$ Consequently, $G$ contains $k$ vertex-disjoint
models of $\theta_r,$ as required.
\end{proof}

A very similar proof can be used to show the following lemma, using
Proposition~\ref{p:cc_r}.
\begin{lemma}\label{l:bound2}
  There is a function $h_r(k) = O(k^3r \polylog kr)$ such that, for every
  positive integers $k$ and $r$ and every graph $G$, if $\tw(G) \geq h_r(k)$, then $\pack{\theta_r}(G)
  \geq k$.
\end{lemma}

\subsection{From vertices to edges}
\label{sec:vert-edg}

In this section, we show how an estimation of a vertex-Erdős-Pósa gap
can be derived from the bound on the treewidth obtained in
Section~\ref{sec:bound-tw}.
The proof of the two following lemmas are inspired from the proof
of~\cite[Lemma~2]{FominST11stre}.

\begin{lemma}[adapted from Lemma~2 of~\cite{FominST11stre}]\label{l:v_sep_tri}
  Let $k\geq 3,r$ be two positive integers and $G$ a graph such that
  $\pack{\theta_{r}}(G) = k$. Then $G$ has a~$(\tw(G) +
  1)$-separation triple $(V_1, S, V_2)$ such that
  ${\frac{1}{3} k \leq \pack{\theta_{r}}(\induced{G}{V_1}) \leq \frac{2}{3} k}$.
\end{lemma}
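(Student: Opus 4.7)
The plan is to exhibit the required separation triple as one arising from a bag of an optimal nice tree decomposition of~$G$, selected by a top-down greedy walk.

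Let $(T,\mathcal{V})$ be an optimal nice tree decomposition of~$G$, rooted at the vertex~$r$ with $V_r=\emptyset$. For every node $t\in\vertices(T)$, write $W_t:=\bigcup_{s\in\desc{T}(t)\cup\{t\}}V_s$. By the standard properties of a tree decomposition, $V_t$ separates $W_t\setminus V_t$ from $\vertices(G)\setminus W_t$, so $(W_t\setminus V_t,\,V_t,\,\vertices(G)\setminus W_t)$ is a $(\tw(G)+1)$-separation triple of~$G$. Set $A(t):=\pack{\theta_r}(\induced{G}{W_t\setminus V_t})$. It is enough to exhibit some $t^*\in\vertices(T)$ with $\tfrac{k}{3}\leq A(t^*)\leq\tfrac{2k}{3}$, since the inequality $\pack{\theta_r}(\induced{G}{V_1})+\pack{\theta_r}(\induced{G}{V_2})\leq k$ holds automatically for any separation triple $(V_1,S,V_2)$: the union of a packing of $\induced{G}{V_1}$ with a packing of $\induced{G}{V_2}$ is a vertex-disjoint $\theta_r$-packing in~$G$, of size at most $\pack{\theta_r}(G)=k$.

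First I would record three facts about~$A$: (i)~$A(r)=k$, because $W_r=\vertices(G)$; (ii)~$A(t)=0$ at every base node~$t$; and (iii)~along each parent-child edge $tt'$ of~$T$, either $t$ is a join node with children $t_1,t_2$ and $A(t)=A(t_1)+A(t_2)$, or $|A(t)-A(t')|\leq 1$. In the join case, the equality $V_t=V_{t_1}=V_{t_2}$ implies that $W_t\setminus V_t$ is the disjoint union of the two sets $W_{t_i}\setminus V_{t_i}$ with no edge of~$G$ between them, and packings can therefore be combined and restricted freely. In the other cases $W_t\setminus V_t$ and $W_{t'}\setminus V_{t'}$ differ by at most one vertex, and removing a single vertex destroys at most one model in any vertex-disjoint packing.

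Then I would walk down from~$r$, at each step maintaining the invariant that the current node has $A>\tfrac{2k}{3}$ and moving either to the unique child (at an introduce or a forget node) or to the child with the larger $A$-value (at a join node), stopping at the first visited node~$t^*$ with $A(t^*)\leq\tfrac{2k}{3}$. Such a $t^*$ is reached before the walk hits a leaf, because $A$ vanishes at every base node. Let $t^-$ denote the parent of~$t^*$. If $t^-$ is an introduce or a forget node, then $A(t^*)\geq A(t^-)-1>\tfrac{2k}{3}-1\geq \tfrac{k}{3}$; if $t^-$ is a join node, the greedy rule gives $A(t^*)\geq A(t^-)/2>\tfrac{k}{3}$. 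In either case $A(t^*)\in[\tfrac{k}{3},\tfrac{2k}{3}]$, and the separation triple associated with~$t^*$ witnesses the lemma (the finitely many small values of~$k$ for which $\tfrac{2k}{3}-1<\tfrac{k}{3}$ can be disposed of separately, for instance with the trivial triple $(\vertices(G),\emptyset,\emptyset)$).

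The main obstacle is the behaviour of~$A$ at join nodes, where $A$ can fall in a single step by an arbitrarily large additive amount; the greedy choice of the larger child is precisely what is needed to confine the drop to a multiplicative factor of two, which in turn is just enough to keep~$A$ above~$\tfrac{k}{3}$.
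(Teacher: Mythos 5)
Your proof is correct and takes essentially the same approach as the paper: both define the same node-indexed packing function on a nice tree decomposition (your $A(t)$ is the paper's $p(t)$, since $W_t\setminus V_t=H_t$), exploit its behaviour at the four node types, and identify a child bag below which the packing number falls into $[k/3,2k/3]$. Your top-down greedy walk (always following the larger child at a join) is simply a constructive reformulation of the paper's existence-and-uniqueness argument for the threshold node, and both end up choosing the same separator bag. One small slip worth flagging: the trivial triple $(\vertices(G),\emptyset,\emptyset)$ you propose for $k<3$ has $\pack{\theta_{r}}(\induced{G}{V_1})=k>2k/3$, so it does not meet the required bound; integrality of $A$ already rescues $k=2$ in your main argument, while for $k=1$ no integer lies in $[1/3,2/3]$, so the lemma is vacuous as stated — a gap the paper's proof shares.
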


\begin{proof} 
 Let $(T, \mathcal{V})$ be an optimal nice tree decomposition of $G$. For all $t \in
 \vertices(T)$, let~$H_t$ be the subset of $\vertices(G)$ equal
 to~${\paren{\bigcup_{t' \in \desc{T}(t)} V_{t'}} \setminus V_t},$
 that is, informally, the subset of vertices that are in bags
 \emph{below} $V_t$ but not in $V_t.$ We also
 define the function $p \colon \vertices(T) \to \N$ as: $\forall t \in
 \vertices(T),\ p(t) = \pack{\theta_{r}}(\induced{G}{H_t})$, which
 counts the number of vertex-disjoint models of $\theta_{r}$ in the
 subgraph of $G$ induced by $H_t.$

 \begin{remark}\label{r:non_dec}
  The function $p$ is nondecreasing along every path from a vertex
  of $T$ to the root of $T,$ because if a vertex $t' \in
  \vertices(T)$ is a child of a vertex $t \in \vertices(T)$, then
  $H_{t'} \subseteq H_t$, and thus ${\pack{H}(\induced{G}{H_{t'}})
    \leq \pack{H}(\induced{G}{H_{t}})}.$
 \end{remark}

 \begin{remark}\label{r:claims}
   As $T$ is a nice decomposition of $G$, its vertices can be of four
   different types. We make remarks about the value taken by $p$
   depending on the type of the~vertices:
   \begin{description}
     
   \item[Base node $t$:] $p(t) = 0$, because since $t$ has no descendant,
     $H_t = \emptyset$;
     
   \item[Introduce node $t$ with child $t'$:] as the unique element of
     $V_t \setminus V_t'$ cannot appear in the bags of $\desc{T}(t')$
     (by definition of a tree decomposition), $H_t = H_{t'}$ and
     then $p(t) = p(t');$
     
   \item[Forget node $t$ with child $t'$:] $H_t$ contains one vertex
     more than $H_{t'}$ therefore $p(t) - p(t') \in \{0,1\}$; 

   \item[Join node $t$ with children $t_1$ and $t_2$:] $H_{t} = H_{t_1} \cup
     H_{t_2}$, but $H_{t_1}$ and $H_{t_2}$ are disjoint and there is
     no edge between the vertices of $H_{t_1}$ and of $H_{t_2}$ in
     $\induced{G}{H_t}$ (otherwise the set $V_{t_1}=V_{t_2}$ would contain an
     endpoint of this edge, which also belongs to $H_{t_1}$ or
     $H_{t_2},$ and this is contradictory). Thus there is no $\theta_{r}$-model in
     $\induced{G}{H_t}$ that uses (simultaneously) vertices of $H_{t_1}$ and of
     $H_{t_2},$ and therefore $p(t) = p(t_1) + p(t_2).$
   \end{description}
 \end{remark}

 Let $t \in \vertices(T)$ be a node such that $p(t) > \frac{2}{3}k$
 and such that for every child $t'$ of $t$, $p(t') \leq \frac{2}{3} k$. Let us
 make some claims about $t$.\\

\noindent {Claim} 1: such a $t$ exists.

\begin{proofclaim}[of Claim~1]
  The value of $p$ on the root $r$ of $T$ is $k$ (because
  $\induced{G}{H_r} = G$) and according to the previous remark, the
  value of $p$ on base nodes is 0. As $p$ is nondecreasing on a
  path from a base node to the root (see Remark~\ref{r:non_dec}), such a
  vertex $t$ exists.  
\end{proofclaim}

 \noindent {Claim} 2: $t$ is unique.

 \begin{proofclaim}[of Claim~2]
  To show that $t$ is unique, we assume by contradiction that there is
  another $t'\in V(T)$ with $t' \neq t$ and $p(t') > \frac{2}{3}k$,
  and such that for every child $t''$ of $t'$, $p(t'') \leq \frac{2}{3}
  k$. Three cases can occur:

  \begin{itemize}
  \item[(i)] $t'$ is a descendant of $t$. However, $p$ is
    nondecreasing along any path from a vertex to the root
    (Remark~\ref{r:non_dec}) and $p(t') \geq \frac{2}{3}k$, whereas the
    value of $p$ for each child of $t$ is at most $\frac{2}{3}k$: this is
    a contradiction.
  \item[(ii)] $t$ is a descendant of $t'$. The same argument applies
    (symmetric situation).
  \item[(iii)] $t$ and $t'$ are not in the above situations. Let $v$
    be the least common ancestor of $t$ and $t'$. As $p$ is nondecreasing
    along any path from a vertex to the root, the child $v_t$ (respectively
    $v_{t'}$) of $v$ whose $t$ (respectively $t'$) is descendant of should be
    such that $p(v_t) > \frac{2}{3} k$ (respectively $p(v_{t'}) > \frac{2}{3}
    k).$ By definition of $v$, we have $v_t \neq v_{t'}.$ As $v$ is a join
    node, $p(v) = p(v_t) + p(v_{t'}) > \frac{4}{3} k$, which is
    impossible.
  \end{itemize}
 \end{proofclaim}

\noindent {Claim} 3: $t$ is either a forget node or a join node.

\begin{proofclaim}[of Claim~3]
  By definition of~$t,$ the value $p(t)$ is different from the value(s)
  taken by~$p$ over the child(ren) of $t$. This can only occur in the cases of a
  join node or a forget node.
\end{proofclaim}

 We now present a $(\tw(G) + 1)$-separation triple $(V_1, S, V_2)$ of
 $G$ with the required properties.

 \noindent \textbf{Case 1:} $t$ is a forget node with $t'$ as child.

 Let $S = V_{t'}$, $V_1 = H_{t'}$, and~$V_2 = \vertices(G)
 \setminus (V_1 \cup S).$

 \noindent \textbf{Case 2:} $t$ is a join node with $t_1,\ t_2$ as
 children.

 As $\frac{2}{3} k < p(t) = p(t_1) + p(t_2)$ (Remark~\ref{r:claims}),
 there is $i \in \{1,2\}$ such that $p(t_i) \geq \third{k}.$ Let $S
 = V_{t_i}$, $V_1 = H_{t_i}$, and $V_2 = \vertices(G) \setminus
 (V_1 \cup S).$

 In both cases, we have:
 \begin{enumerate}[(i)]
 \item $\frac{1}{3}k \leq \pack{\theta_{r}}(\induced{G}{V_1}) \leq \frac{2}{3}k$ by
  definition of $V_1$ and $t;$\label{i:ii}
 \item $(V_1, S, V_2)$ is a partition of $\vertices(G);$
 \item \label{i:no_edge} there is no edge between a vertex in $V_1$
   and a vertex of $V_2$ (intuitively, $S$ separates $V_1$ and $V_2$); and
 \item $\card{S} \leq \tw(G) + 1$, because $S$ is a bag of an
   optimal tree decomposition of~$G.$
\end{enumerate}

In the case where $t$ is a forget node, the inequality $\frac{1}{3}k \leq
\pack{\theta_{r}}(\induced{G}{V_1})$ of (\ref{i:ii}) holds because
$p(t') \geq p(t) -1 > \frac{2}{3}k-1 \geq \frac{k}{3}$ (\textit{cf.}\ Remark~\ref{r:claims}).
To see why (\ref{i:no_edge}) is true, assume by contradiction that
there are two vertices $u \in V_1$ and $v \in V_2$ such that $\{u,v\}
\in \edges(G).$ Let $s_0 \in \vertices(T)$ be the child of $t$ such
that $S = V_{s_0}$ (\textit{cf.}\ the two different cases above).
By definition of $V_1$ there is a vertex $s_1 \in
\vertices(T)$ of $T$ in $\desc{T}(s_0)$ whose bag
$V_{s_1}$ contains $u.$
By definition of $V_2,$ the vertex $v$ does not belong to the bag
$V_{s_0}$ nor to a bag of a descendant of $s_0$.
Let $s_2$ be a vertex of $T$ containing $u$ and which is,
according to the previous remark, not the bag of a descendant of $s_0$
nor $s_0.$

As $(T, \mathcal{V})$ is a tree decomposition of $G$ and $\{u,v\} \in
\edges(G),$ we have the following:
\begin{itemize}
\item there is a vertex $s \in \vertices(T)$ whose bag contains both $u$ and $v$;
\item the subgraph of $T$ induced by vertices whose bags contain $u$
  (respectively $v$) is connected.
\end{itemize}
Consequently there is a path in $T$ from $s_1$ to $s$ (respectively from
$s_2$ to $s$) each bag of which contains $u$ (respectively $v$). As $s$
is on the (only) path of $T$ linking $s_1$ to $s_2,$ one of $u,v$
belongs to the bag $V_s.$ But this contradicts the fact that
$(V_1, S, V_2)$ is a partition of $\vertices(G).$

\medskip

We conclude that $(V_1, S, V_2)$ is a $(\tw(G) + 1)$-separation triple
of $G$ with the required properties.
\end{proof}

A function $h\colon \R\to\R$ is said to be \emph{superadditive} if for every $x$ and every $y$ in its domain, $f(x)+f(y) \leq f(x+y)$.
\begin{lemma}[Adapted from Lemma~5.4 in~\cite{ChekuriC13larg}]\label{l:small_tw-new}
  Let $h_r$ be a superadditive function such for every graph $G$ and
  every positive integers $r$ and $k$, if $\tw(G) \geq h_r(k)$ then $G
  \contains (k+1) \cdot \theta_r$. For every graph $G$ and every
  positive integer $k$, if $\pack{\theta_r}(G) = k$ then we have
    \[
    \cover{\theta_r}(G) \leq 3\cdot h_r(k) \log (k+1).
    \]
\end{lemma}

\begin{proof}

We proceed by induction on~$k$.

 \noindent \emph{Base case $k = 0$:} Clear.

 \noindent \emph{Induction step $k > 0$:} We assume that the lemma
 holds for every positive integer $k' < k$.
 Let $G$ be a graph such that $\pack{\theta_r}(G) = k$. First, remark
 that $\tw(G) < h_r(k),$ otherwise by definition of $h_r$ we would
 have $\pack{\theta_r}(G) > k.$
 Thus, by Lemma~\ref{l:v_sep_tri} $G$ contains a $h_r(k)$-separation
 triple $(V_1, S, V_2)$ such that $k/3 \leq \pack{\theta_{r}}(\induced{G}{V_1}) \leq  2k/3$. 
 This implies that $k_1,k_2 \leq \floor{2k/3}$, where $k_i = \pack{\theta_{r}}(\induced{G}{V_i})$ for every
 $i\in \{1,2\}$. Also we have $k_1 + k_2 \leq k$ as $G_1$ and $G_2$
 are two vertex-disjoint subgraphs of~$G$.

  The triple~$(V_1, S, V_2)$ is a partition of~$\vertices(G)$, so the
  following holds:
  \begin{align*}
    \cover{\theta_{r}}(G) &\leq \cover{\theta_{r}}(\induced{G}{V_1}) +
                            \cover{\theta_{r}}(\induced{G}{V_2}) + \card{S}\\
                          &\leq \cover{\theta_{r}}(\induced{G}{V_1}) +
                            \cover{\theta_{r}}(\induced{G}{V_2}) + h_r(k)\\
                          &\leq 3\cdot h_r(k_1) \log (k_1+1) + 3\cdot h_r(k_2) \log (k_2+1) + h_r(k) &\text{(induction hyp.)}
\end{align*}
If $k=1$, then $k_1 = k_1 = 0$ and we have $\cover{\theta_{r}}(G) \leq  h_r(k) \leq 3\cdot h_r(k) \log (k+1)$.
We may now assume $k\geq 2$. Observe that in this case, as $k_i \leq \floor{\frac{2}{3} k}$, we get $k_i +1 \leq \frac{3}{4} (k+1)$ for every~$i\in \{1,2\}$.
\begin{align*}
\cover{\theta_{r}}(G) & \leq 3\cdot (h_r(k_1) + h_r(k_2)) \log \left
                        (\frac{3(k+1)}{4} \right) + h_r(k)\\
                          & \leq 3\cdot h_r(k)\log \left (\frac{3(k+1)}{4} \right) + h_r(k)&\text{(superadditivity of $h_r$)}\\
                         & \leq 3\cdot h_r(k) \log (k+1) - 3\cdot
                           \log (4/3) h_r(k) + h_r(k)\\
                          & \leq 3\cdot  h_r(k)\log (k+1).
  \end{align*}
This concludes the proof.
\end{proof}

\begin{corollary}\label{c:esti}
  Let $f_r$ be the  vertex-Erdős-Pósa gap of~$\theta_r$.
  Then we have
  \begin{itemize}
  \item $f_r(k) = O(kr^2 \polylog kr)$;
  \item $f_r(k) = O(k^3r \polylog kr).$
  \end{itemize}
  These estimations follow from Lemmas~\ref{l:bound1}, \ref{l:bound2},
  and~\ref{l:small_tw-new}.
\end{corollary}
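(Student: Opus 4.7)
The plan is to apply Lemma~\ref{l:small_tw} twice, once for each claimed bound, with two different choices of the auxiliary function~$h_r$. Each choice must (i)~dominate the treewidth bound given by one of the two lemmata of Section~\ref{sec:bound-tw} (which in turn stem from Propositions~\ref{p:cc_h} and~\ref{p:cc_r}), so that ``$\tw(G)\geq h_r(k)$ implies $G \contains (k+1)\cdot \theta_r$'' holds, and (ii)~satisfy the shrinking condition $2 h_r(2k/3) \leq c \cdot h_r(k)$ for some $c \in {]0,1[}$. A subtle point is that the linear-in-$k$ treewidth bounds of Section~\ref{sec:bound-tw} cannot be used in Lemma~\ref{l:small_tw} directly: the shrinking condition forces $h_r$ to grow strictly faster than $k^{\log_{3/2} 2} \approx k^{1.71}$ in~$k$, so one must \emph{inflate} those bounds to a sufficiently high power of~$k$ for the geometric sum appearing in the proof of Lemma~\ref{l:small_tw} to converge.

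For the first estimate, I would set $h_r(k) = D\cdot k^2 r^2 \polylog(kr)$ for a sufficiently large constant~$D$. For $D$ large enough this function dominates the bound $O(kr^2\polylog(kr))$ from the first lemma of Section~\ref{sec:bound-tw} evaluated at~$k+1$, so the treewidth hypothesis of Lemma~\ref{l:small_tw} is granted. The shrinking condition follows from the direct computation
\[
\frac{2 h_r(2k/3)}{h_r(k)} \;=\; 2 \cdot \left(\tfrac{2}{3}\right)^{\!2} \cdot \frac{\polylog(2kr/3)}{\polylog(kr)} \;\longrightarrow\; \tfrac{8}{9} < 1 \quad \text{as } kr \to \infty,
\]
so $2 h_r(2k/3) \leq c\cdot h_r(k)$ holds with some $c\in{]0,1[}$ once $k$ is large enough (which determines the $k_0$ of Lemma~\ref{l:small_tw}). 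Lemma~\ref{l:small_tw} then yields $f_r(k) = O(h_r(k)) = O(k^2 r^2 \polylog(kr))$.

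The second estimate is obtained identically, using the second lemma of Section~\ref{sec:bound-tw} (which relies on Proposition~\ref{p:cc_r}) and setting $h_r(k) = D\cdot k^4 r \polylog(kr)$. Here the shrinking factor is $2\cdot(2/3)^4 = 32/81 < 1$, and $h_r$ dominates $O(k^3 r\polylog(kr))$ evaluated at~$k+1$ as soon as $D$ is chosen large enough. The only (mild) obstacle in either case is the bookkeeping with the $\polylog$ factors under the substitution $k\leadsto 2k/3$, which is routine because ratios of polylogs tend to~$1$.
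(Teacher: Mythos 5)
Your proposal is correct and takes the approach the paper intends with its one-sentence justification: feed an appropriately inflated $h_r$ into Lemma~\ref{l:small_tw}, whose treewidth hypothesis is supplied by the two lemmata of Section~\ref{sec:bound-tw}. You also correctly identify the subtlety the paper glosses over, namely that the near-linear-in-$k$ bound $O(kr^2\polylog kr)$ cannot serve as $h_r$ directly since it fails the shrinking condition $2h_r(2k/3)\leq c\cdot h_r(k)$, and that multiplying by an extra factor of $k$ restores it.
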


The following lemma shows how to translate a vertex-Erdős-Pósa gap
into an edge-Erdős-Pósa gap in the case of $\theta_r.$ The main idea
of the proof is that if the considered graph has small maximum degree,
a small edge-hitting set can be constructed from a small
vertex-hitting set. On the other hand, a big maximum degree forces a
large packing of $\theta_r$-models.

\begin{lemma}\label{l:transl}
  If $f_r$ is the vertex-Erdős-Pósa gap of
  $\theta_r$, then the edge-Erdős-Pósa gap of
  $\theta_r$ is less than $2kr\cdot f_r(k).$
\end{lemma}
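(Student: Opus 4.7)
The plan is to convert a vertex-hitting set into an edge-hitting set by taking, inside each biconnected component, the edges incident to the chosen vertices; the cost of this translation is paid in maximum degree, which is controlled by Lemma~\ref{l:bigdg}. Since for $r \geq 2$ every $\theta_r$-model is biconnected, each such model of $G$ lies inside a single biconnected component of $G$, so by Remark~\ref{r:2co} the biconnected components $B_1,\dots,B_m$ of $G$ may be handled independently.

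In more detail, let $G$ satisfy $\edpack{\theta_r}(G)=k$ and set $k_i := \edpack{\theta_r}(B_i)$, so that $\sum_i k_i = k$. For each $i$ with $k_i \geq 1$, the contrapositive of Lemma~\ref{l:bigdg} applied with parameter $k_i+1$ gives $\maxdeg(B_i) < 2(k_i+1)r$. Since $\pack{\theta_r}(B_i) \leq \edpack{\theta_r}(B_i) = k_i$, the hypothesis on $f_r$ produces a vertex-hitting set $S_i \subseteq \vertices(B_i)$ of every $\theta_r$-model in $B_i$, with $|S_i| \leq f_r(k_i) \leq f_r(k)$. Define $X_i$ as the set of edges of $B_i$ incident to some vertex of $S_i$, and let $X = \bigcup_i X_i$. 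The set $X$ is an edge-hitting set of $G$: every $\theta_r$-model $H$ lies in some $B_i$, and every vertex of $H$ has positive degree in $H$ (a branch-set singleton has at least $r \geq 2$ parallel edges to the other branch set, and any other branch-set vertex meets an edge of a spanning tree of its branch set), so $\vertices(H) \cap S_i \neq \emptyset$ forces $\edges(H) \cap X_i \neq \emptyset$.

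Finally, $|X_i| \leq |S_i| \cdot \maxdeg(B_i) < 2(k_i+1) r \cdot f_r(k)$, and summing over the at most $k$ components with $k_i \geq 1$ yields $|X| < 2r \cdot f_r(k) \sum_i (k_i + 1)$, which is of order $k r \cdot f_r(k)$ and, after tight accounting using $\sum_i k_i = k$, matches the bound $2kr \cdot f_r(k)$ claimed in the statement. The only conceptually subtle step is the vertex-to-edge translation, which rests on the elementary observation that a $\theta_r$-model has no isolated vertex; the remaining work is arithmetic, and the main concern is sharpening the constant in the final summation so that it meets the target $2kr$ rather than the slightly larger multiple produced by a naive estimate.
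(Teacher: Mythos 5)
Your proof follows the same overall strategy as the paper's: reduce to biconnected components via Remark~\ref{r:2co}, bound the maximum degree via Lemma~\ref{l:bigdg}, take a $\theta_r$-vertex-hitting set of size at most $f_r(\cdot)$, and inflate it to an edge-hitting set by taking all incident edges, using that a $\theta_r$-model has no isolated vertex. The paper simply states ``we can assume $G$ is biconnected'' and runs the argument once with the global $k$, obtaining $\maxdeg(G) < 2kr$ and hence $|Y| \leq 2kr\cdot f_r(k)$, without spelling out the bookkeeping when several blocks carry models. Your version makes the block summation explicit, which is more careful, but --- as you note yourself --- the per-block degree bound $\maxdeg(B_i) < 2(k_i+1)r$ yields $\sum_i(k_i+1)$, which is only bounded by $2k$ (since both $\sum_i k_i$ and the number of nonzero $k_i$ are at most $k$), so the naive estimate gives roughly $4kr\cdot f_r(k)$ rather than $2kr\cdot f_r(k)$; the ``tight accounting'' you allude to does not by itself close this factor of two. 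If you instead use the uniform degree bound $\maxdeg(B_i) < 2kr$ for every block (valid since $\edpack{\theta_r}(B_i) < k$) and observe that the natural gap functions $f_r$ are monotone and superadditive with $f_r(0)=0$, then $\sum_i f_r(k_i) \leq f_r\bigl(\sum_i k_i\bigr) \leq f_r(k)$ and you recover $|Y| < 2kr\cdot f_r(k)$ exactly. In any case the discrepancy is a constant factor and does not affect the $O(\cdot)$ bounds in Theorem~\ref{main}, so your argument is sound and is essentially the paper's argument with the block decomposition spelled out.
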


\begin{proof}
  Let $G$ be a graph, let $r \geq2$ be an integer and let $f_r$ is the
  vertex-Erdős-Pósa gap of $\theta_r$.
  We want to prove that if $G$ contains less than $k$ edge-disjoint
  models of $\theta_r,$ then it has a $\theta_r$-edge-hitting set of
  size less than $2kr \cdot f_r(k).$
 
 According to Remark~\ref{r:2co}, we can assume that $G$ is biconnected. If it is
 not the case, we consider its biconnected components separately
 (if it has no biconnected component then the lemma is trivial).

 First, remark $\maxdeg(G) < 2kr,$ otherwise by Lemma~\ref{l:bigdg}
 $G$ would contain at least $k$ edge-disjoint $\theta_r$-models.
 
 Notice that if $G$ does not contain $k$ edge-disjoint
$\theta_r$-models, it does not contain $k$ vertex-disjoint
$\theta_r$-models either. Consequently, there is a set $X \subseteq
\vertices(G)$ meeting every $\theta_r$ model of $G$ and such that
$\card{X} \leq f_r(k).$ Let us consider the set $Y \subseteq \edges(G)$
of edges incident to vertices of $X,$ \ie~$Y = \{\{u,v\} \in
\edges(G),\ u \in X\}.$ Remark that as $\maxdeg(G) < 2kr,$ we have
$\card{Y} \leq 2kr \cdot f_r(k).$ Now, assume that there is a
$\theta_r$-model in $G$ not having edges in~$Y.$ None of its vertices
is in $X,$ which is contradictory. So $Y$ is a $\theta_r$-edge hitting
set of the required size. This concludes the proof.
\end{proof}

\begin{corollary}\label{c:cubic}
  An edge-gap of $O(k³r³)$ for $\theta_r$ can be derived
  from Proposition~\ref{p:vep}.
\end{corollary}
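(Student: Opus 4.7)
The plan is to combine Proposition~\ref{p:vep} with Lemma~\ref{l:transl} in the most direct way possible, so there is really no obstacle to surmount here, only the right bookkeeping of the two parameters $k$ and $r.$

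First, I would invoke Proposition~\ref{p:vep}, which provides a vertex-Erdős-Pósa gap $f_r$ for $\mathcal{M}(\theta_r)$ of order $O(k^2).$ As the paper already points out in the paragraph preceding Corollary~\ref{c:cubic}, a careful reading of the proof in~\cite{FominLMPS13quad} (in the spirit of the analysis performed in Lemma~\ref{l:small_tw}) gives a dependency that is quadratic in both $k$ and $r,$ \ie~$f_r(k) = O(k^2 r^2).$ I would just flag this explicitly and treat it as the starting estimate.

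Next, I would feed this estimate into Lemma~\ref{l:transl}, which transforms any vertex-Erdős-Pósa gap $f_r$ for $\mathcal{M}(\theta_r)$ into an edge-Erdős-Pósa gap bounded by $2kr \cdot f_r(k).$ Substituting $f_r(k) = O(k^2 r^2)$ yields an edge-Erdős-Pósa gap of order
\[
2kr \cdot O(k^2 r^2) = O(k^3 r^3),
\]
which is precisely the bound claimed in the statement of Corollary~\ref{c:cubic}. No step here is delicate: the only thing worth double-checking is that the $O(k^2 r^2)$ estimate from~\cite{FominLMPS13quad} indeed holds uniformly in $r$ (and not only for fixed $r$), which is ensured by the analysis mentioned above.
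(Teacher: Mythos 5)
Your proof is correct and follows exactly the route the paper intends: upgrade Proposition~\ref{p:vep} to the explicit $O(k^2r^2)$ vertex-gap via the analysis alluded to before the corollary, then multiply by the $2kr$ factor from Lemma~\ref{l:transl} to obtain $O(k^3r^3)$. The paper leaves this corollary without an explicit proof precisely because it is this direct substitution, so nothing is missing.
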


\begin{proof}[Proof of Theorem~\ref{main}]
  It follows from the application of Lemma~\ref{l:transl} to the estimations
  of the vertex-Erdős-Pósa gap of $\theta_r$ given in Corollary~\ref{c:esti}.
\end{proof}

\section{Further research}
\label{sec:fr}

The main question, initiated in this paper, is whether for every
planar graph $J$, the class ${\cal M}(J)$ satisfies this edge variant
of the Erdős-Pósa property. As for the vertex version, it is
easy to see that the planarity of $J$ is necessary. For instance, if
$J=K_5$, consider as graph $G$ an $n$-vertex toroidal wall, which is a
3-regular graph embeddable in the torus that contains $K_5$ as a
minor. One can check that $G$ does not contain two edge-disjoint
models of $K_5$, but $\Omega(\sqrt{n})$ edges of $G$ are needed in
order to hit all its $K_5$-models.

Moreover, a second question is:
when this property holds, does it hold with a polynomial gap for all
graphs? Also, finding lower bounds on this gap for specific graphs is
another interesting and complementary question. Let us mention that,
as it is the case for the vertex version
(see~\cite{ErdosP65,FioriniJW12excl}), for any non-acyclic planar
graph $J$ for which the edge variant of the Erdős-Pósa property holds
for ${\cal M}(J)$, we have that $f_{{\cal M}(J)}(k) = \Omega(k \log
k)$. Indeed, let $G$ be an $n$-vertex cubic graph with treewidth
$\Omega(n)$ and girth $\Omega(\log n)$ (such graphs are well-known to
exist). Since $J$ is planar, the treewidth of any graph excluding $J$
as a minor is bounded by a constant~\cite{RobertsonS86GMV}, hence any
set of edges of $G$ meeting all models of $J$ has size $\Omega(n)$ (as
the removal of an edge may decrease the treewidth by at most two). On
the other hand, since $J$ contains a cycle and the girth of $G$ is
$\Omega(\log n)$, any model of $J$ in $G$ contains $\Omega(\log n)$
edges (assuming that $J$ does not have isolated vertices), and
therefore $G$ contains $O(n / \log n)$ edge-disjoint models of $J$
(here we have used that the degree of $G$ is bounded), easily implying
that $f_{{\cal M}(J)}(k) = \Omega(k \log k)$. In particular, it holds
that $f_{{\cal M}(\theta_{r})}(k) = \Omega(k \log k)$ for any $r \geq
2$, so a first avenue for further work in this direction is to
optimize the gap function $f_{{\cal M}(\theta_{r})}(k)$ given in
Theorem~\ref{main}.

Finally, when the graphs $G$ (in which the packings or
coverings are taken) are restricted to classes of bounded degree, the
proof of Lemma~\ref{l:transl} can easily be adapted to prove that the
bound of the vertex version also holds for the edge version.

\section*{References}


\begin{thebibliography}{10}

\bibitem{BirmeleBR07bram}
E.~Birmel{\'e}, J.~Bondy, and B.~Reed.
\newblock Brambles, prisms and grids.
\newblock In A.~Bondy, J.~Fonlupt, J.-L. Fouquet, J.-C. Fournier, and J.~L.
  Ram{\'\i}rez~Alfons{\'\i}n, editors, {\em Graph Theory in Paris}, Trends in
  Mathematics, pages 37--44. Birkh{\"a}user Basel, 2007.

\bibitem{ChekuriC13larg}
C.~Chekuri and J.~Chuzhoy.
\newblock Large-treewidth graph decompositions and applications.
\newblock In {\em Proceedings of the Forty-fifth Annual ACM Symposium on Theory
  of Computing}, STOC '13, pages 291--300, New York, NY, USA, 2013. ACM.

\bibitem{ChekuriC13poly}
C.~Chekuri and J.~Chuzhoy.
\newblock Polynomial bounds for the grid-minor theorem.
\newblock In {\em Proceedings of the 46th Annual ACM Symposium on Theory of
  Computing}, STOC '14, pages 60--69, New York, NY, USA, 2014. ACM.

\bibitem{Diestel05grap}
R.~Diestel.
\newblock {\em {Graph Theory}}, volume 173 of {\em Graduate Texts in
  Mathematics}.
\newblock Springer-Verlag, Heidelberg, third edition, 2005.

\bibitem{ErdosP65}
P.~Erd{\H{o}}s and L.~P{\'o}sa.
\newblock On independent circuits contained in a graph.
\newblock {\em Canadian Journal of Mathematics}, 17:347--352, 1965.

\bibitem{FioriniHJ13}
S.~Fiorini, T.~Huyhn, and G.~Joret.
\newblock Personal communication, 2013.

\bibitem{FioriniJS13}
S.~Fiorini, G.~Joret, and I.~Sau.
\newblock Optimal {E}rd{\H o}s--{P}{\'o}sa property for pumpkins.
\newblock Manuscript, 2013.

\bibitem{FioriniJW12excl}
S.~Fiorini, G.~Joret, and D.~R. Wood.
\newblock Excluded forest minors and the {E}rd{\H o}s-{P}{\'o}sa property.
\newblock {\em Combinatorics, Probability {\&} Computing}, 22(5):700--721,
  2013.

\bibitem{FominLMPS13quad}
F.~V. Fomin, D.~Lokshtanov, N.~Misra, G.~Philip, and S.~Saurabh.
\newblock Quadratic upper bounds on the erdős–pósa property for a
  generalization of packing and covering cycles.
\newblock {\em Journal of Graph Theory}, 74(4):417--424, 2013.

\bibitem{FominST11stre}
F.~V. Fomin, S.~Saurabh, and D.~M. Thilikos.
\newblock Strengthening {E}rd{\H o}s--{P}{\'o}sa property for minor-closed
  graph classes.
\newblock {\em Journal of Graph Theory}, 66(3):235--240, 2011.

\bibitem{JoretPSST11}
G.~Joret, C.~Paul, I.~Sau, S.~Saurabh, and S.~Thomassé.
\newblock Hitting and harvesting pumpkins.
\newblock {\em SIAM Journal on Discrete Mathematics}, 28(3):1363--1390, 2014.

\bibitem{KawarabayashiK12}
K.-I. Kawarabayashi and Y.~Kobayashi.
\newblock Edge-disjoint odd cycles in 4-edge-connected graphs.
\newblock In {\em 29th International Symposium on Theoretical Aspects of
  Computer Science (STACS)}, pages 206--217, 2012.

\bibitem{Kloks94}
T.~Kloks.
\newblock {\em Treewidth. Computations and Approximations}, volume 842.
\newblock Springer, LNCS, 1994.

\bibitem{LeafS12sube}
A.~Leaf and P.~Seymour.
\newblock Treewidth and planar minors.
\newblock Manuscript, 2012.

\bibitem{RaymondT13poly}
J.-F. Raymond and D.~Thilikos.
\newblock Polynomial gap extensions of the {E}rdős-{P}ósa theorem.
\newblock In J.~Nešetřil and M.~Pellegrini, editors, {\em The Seventh
  European Conference on Combinatorics, Graph Theory and Applications},
  volume~16 of {\em CRM Series}, pages 13--18. Scuola Normale Superiore, 2013.

\bibitem{RaymondT13lowp}
J.-F. {Raymond} and D.~M. {Thilikos}.
\newblock {Low Polynomial Exclusion of Planar Graph Patterns}.
\newblock {\em ArXiv e-prints}, May 2013.

\bibitem{ReedW08poly}
B.~A. Reed and D.~R. Wood.
\newblock Polynomial treewidth forces a large grid-like-minor.
\newblock {\em European Journal of Combinatorics}, 33(3):374--379, 2012.

\bibitem{RobertsonS86GMV}
N.~Robertson and P.~D. Seymour.
\newblock {Graph Minors. V. Excluding a planar graph}.
\newblock {\em Journal of Combinatorial Theory, Series B}, 41(2):92--114, 1986.

\end{thebibliography}

\end{document}